\newcommand{\mat}[1]{\ensuremath{\mathsf{#1}}}
\newcommand{\ip}[3]{\left\langle {#1} , {#3} \right\rangle_{\!#2}}
\newcommand{\ipL}[2]{\ip{#1}{N_L}{#2}}
\newcommand{\ipLi}[2]{\ip{#1}{N_{L_i}}{#2}}
\newcommand{\ipR}[2]{\ip{#1}{N_R}{#2}}
\newcommand{\ipC}[2]{\ip{#1}{N}{#2}}
\newcommand{\Ent}{S}
\newcommand{\dEnt}{\Ent_t}
\newcommand{\dEntij}{\left(\Ent_{t}\right)_{ij}}
\newcommand{\TotCon}{IU}
\newcommand{\dTotCon}{\TotCon_t}
\newcommand{\TotEnt}{IS}
\newcommand{\dTotEnt}{\TotEnt_t}
\newcommand{\ICon}{\dTotCon^I}
\newcommand{\IEnt}{\dTotEnt^I}
\newcommand{\eqInd}{q}
\newcommand{\FstarLRK}{\tilde{\mat{F}}_{L,R}^{\EC,\eqInd}}
\newcommand{\FstarLRKT}{\left(\tilde{\mat{F}}_{L,R}^{\EC,\eqInd}\right)^{\!T}}
\newcommand{\FstarLiRK}{\tilde{\mat{F}}_{L_i,R}^{\EC,\eqInd}}
\newcommand{\FstarLiRKT}{\left(\tilde{\mat{F}}_{L_i,R}^{\EC,\eqInd}\right)^{\!T}}
\newcommand{\evL}{\bm{V}^{\eqInd,L}}
\newcommand{\evR}{\bm{V}^{\eqInd,R}}
\newcommand{\evRT}{\left(\bm{V}^{\eqInd,R}\right)^{\!T}\!}
\newcommand{\One}{\bm{1}}
\newcommand{\OneR}{\One^{R}}
\newcommand{\OneRT}{\left(\!\One^{R}\right)^{\!T}\!}
\newcommand{\OneL}{\One^{L}}
\newcommand{\OneLT}{\left(\!\One^{L}\right)^{\!T}\!}
\newcommand{\OneLi}{\One^{L_i}}
\newcommand{\OneLiT}{\left(\!\One^{L_i}\right)^{\!T}\!}
\newcommand{\EC}{\mathrm{EC}}
\newcommand{\ES}{\mathrm{ES}}
\newcommand{\uk}{\bm{U}}
\newcommand{\ukij}{\uk_{ij}}
\newcommand{\ukim}{\uk_{im}}
\newcommand{\ukmj}{\uk_{mj}}
\newcommand{\fsharp}{\tilde{\bm{F}}^{\EC}}
\newcommand{\gsharp}{\tilde{\bm{G}}^{\EC}}
\newcommand{\fstar}{\tilde{\bm{F}}^{\EC}}
\newcommand{\gstar}{\tilde{\bm{G}}^{\EC}}
\newcommand{\mass}{\mat M}
\newcommand{\massL}{\mat M_L}
\newcommand{\massLi}{\mat M_{L_i}}
\newcommand{\massR}{\mat M_R}
\newcommand{\D}{\mat D}
\newcommand{\PLtoX}{\mat{P}_{L2\Xi}}
\newcommand{\PXtoL}{\mat{P}_{\Xi2L}}
\newcommand{\PRtoX}{\mat{P}_{R2\Xi}}
\newcommand{\PXtoR}{\mat{P}_{\Xi2R}}
\newcommand{\dxdy}{\,\mathrm{d}x\mathrm{d}y}
\newcommand{\dxideta}{\,\mathrm{d}\xi\mathrm{d}\eta}
\newcommand{\jump}[1]{\left\llbracket #1 \right\rrbracket}
\theoremstyle{theorem}
\newtheorem{thm}{Theorem}
\newtheorem{lem}{Lemma}
\newtheorem{cor}{Corollary}
\theoremstyle{remark}
\newtheorem{rem}{Remark}
\DeclareMathOperator{\diag}{diag}
\newcommand{\EE}{\mathbb{E}}
\numberwithin{equation}{section}
\begin{document}

%Title
\title{An Entropy Stable $h/p$ Non-Conforming Discontinuous Galerkin Method with the Summation-by-Parts Property}
% shorter title for each page
\titlerunning{An Entropy Stable $h/p$ Non-Conforming DG Method with the SBP Property}
%Authors
\author{Lucas Friedrich \and Andrew R. Winters \and David C.~Del Rey Fern\'{a}ndez \and Gregor J. Gassner \and Matteo Parsani \and Mark H. Carpenter}
% Affiliations
\institute{Lucas Friedrich (\email{lfriedri@math.uni-koeln.de}) \and Andrew R. Winters \and Gregor J. Gassner \at Mathematical Institute, University of Cologne, Cologne, Germany \\ David C.~Del Rey Fern\'{a}ndez \at National Institute of Aerospace and Computational AeroSciences Branch, NASA Langley Research Center, Hampton, VA, USA \\ Mark H. Carpenter \at Computational AeroSciences Branch, NASA Langley Research Center, Hampton, VA, USA \\ Matteo Parsani \at King Abdullah University of Science and Technology (KAUST), Computer Electrical and Mathematical Science and Engineering Division (CEMSE), Extreme Computing Research Center (ECRC), Thuwal, Saudi Arabia}

\date{Received: date / Accepted: date}

\maketitle

\begin{abstract}
This work presents an entropy stable discontinuous Galerkin (DG) spectral element approximation for systems of non-linear conservation laws with general geometric $(h)$ and polynomial order $(p)$ non-conforming rectangular meshes. The crux of the proofs presented is that the nodal DG method is constructed with the collocated Legendre-Gauss-Lobatto nodes. This choice ensures that the derivative/mass matrix pair is a summation-by-parts (SBP) operator such that entropy stability proofs from the continuous analysis are discretely mimicked. Special attention is given to the coupling between non-conforming elements as we demonstrate that the standard mortar approach for DG methods does not guarantee entropy stability for non-linear problems, which can lead to instabilities. As such, we describe a precise procedure and modify the mortar method to guarantee entropy stability for general non-linear hyperbolic systems on $h/p$ non-conforming meshes. We verify the high-order accuracy and the entropy conservation/stability of fully non-conforming approximation with numerical examples.
\end{abstract}

\keywords{Summation-by-Parts \and Discontinuous Galerkin \and Entropy Conservation \and Entropy Stability \and $h/p$ Non-Conforming Mesh \and Non-Linear Hyperbolic Conservation Laws}

\section{Introduction}

The non-conforming discontinuous Galerkin spectral element method (DGSEM), with respect to either mesh refinement introducing hanging nodes ($h$), varying the polynomial order ($p$) across elements or both ($h/p$), is attractive for problems with strong varying feature sizes across the computational domain because the number of degrees of freedom can be significantly reduced. Past work has demonstrated that the mortar method \cite{Kopriva1996b,Kopriva2002} is a projection based approach to construct the numerical flux at non-conforming element interfaces. The mortar approach retains high-order accuracy as well as the desirable excellent parallel computing properties of the DGSEM \cite{Tan2012,Hindenlang201286}. However, we are in particular interested in building a high order DG scheme with the aforementioned positive properties that is provably entropy stable for general non-linear problems. That is, the non-conforming DGSEM should satisfy the second law of thermodynamics discretely. Our interest is twofold:
\begin{enumerate}
\item The numerical approximation will obey one of the most fundamental physical laws.
\item For under-resolved flow configurations, like turbulence, entropy stable approximations have been shown to be robust, e.g. \cite{Bohm2017,Gassner2016,Yee2017,Winters2017}.
\end{enumerate}
The subject of non-conforming approximations is natural in the context of applications that contain a wide variety of spatial scales. This is because non-conforming methods can focus the degrees of freedom in a discretization where they are needed. There is some work available for entropy stable $p$ non-conforming DG methods applied to the compressible Navier-Stokes equations, e.g. Parsani et al. \cite{Parsani2016,Parsani2015b} or Carpenter et al. \cite{Carpenter2016}. %However, to guarantee discrete agreement with the second law of thermodynamics for non-conforming approximations requires special attention, particularly at hanging nodes introduced by $h$ refinement.

This work presents an extension of entropy stable non-conforming DG methods to include the hanging nodes ($h$) and the combination of varying polynomials and hanging mesh nodes ($h/p$) for general non-linear systems of conservation laws. We demonstrate that the derivative matrix in the DG context must satisfy the summation-by-parts (SBP) property as well as how to modify the mortar method \cite{Kopriva1996b} to guarantee high-order accuracy and entropy stability on rectangular meshes. As the algorithm of the method is still similar to the mortar approach, parallel scaling efficiency is not influenced by the modifications.

We begin with a short overview of the different DG approaches on rectangular meshes. First, we provide a background of the non-linear entropy stable DGSEM on conforming quadrilateral meshes. We then introduce the popular mortar approach in the nodal DG context. However, we demonstrate that this well-known non-conforming coupling is insufficient to guarantee entropy stability for non-linear partial differential equations (PDEs). The main result of this work is to marry these two powerful approaches, i.e., entropy stability of conforming DG methods and non-conforming coupling, to create a novel, entropy stable, high-order, non-conforming DGSEM for non-linear systems of conservation laws.

%%%%%%%%%%%%%%%%%%%%%%%%%%%%%%%%%%%%%%%%%%%%%%%%%%%%%%%%%%%%%%%%%%%%%%%%%%%%%%%%%%
\subsection{Entropy Stable Conforming DGSEM}
%%%%%%%%%%%%%%%%%%%%%%%%%%%%%%%%%%%%%%%%%%%%%%%%%%%%%%%%%%%%%%%%%%%%%%%%%%%%%%%%%%

We consider systems of non-linear hyperbolic conservation laws in a two dimensional spatial domain $\Omega\subset\mathbb{R}^2$ with $t\in\mathbb{R}^+$
\begin{equation}\label{eq:2DconsLaw}
\bm{u}_t + \bm{f}_x\!\left(\bm{u}\right) + \bm{g}_y\!\left(\bm{u}\right) = \bm{0},
\end{equation}
with suitable initial and boundary conditions. The extension to a three dimensional spatial domain follows immediately. Here, $\bm{u}$ is the vector of conserved variables and $\bm{f},\bm{g}$ are the non-linear flux vectors. Examples of \eqref{eq:2DconsLaw} are numerous, including, e.g., the shallow water equations and the compressible Euler equations. The entropy of a non-linear hyperbolic system is an auxiliary conservation law for smooth solutions (and an inequality for discontinuous solutions), see \cite{Tadmor1987_2,Tadmor2003} for details. Given a strongly convex entropy function, $\Ent=\Ent(\bm u)$, there exists a set of entropy variables defined as
\begin{equation}\label{eq:entVars}
\bm{v} = \frac{\partial\Ent}{\partial \bm{u}}.
\end{equation}
Contracting the system of conservation laws \eqref{eq:2DconsLaw} from the left by the new set of variables \eqref{eq:entVars} yields a scalar conservation law for smooth solutions
\begin{equation}\label{eq:newSys}
\bm{v}^T\left(\bm{u}_t+ \bm{f}_x(\bm{u})+ \bm{g}_y(\bm{u})\right)= S_t + F_x + G_y= 0,
\end{equation}
provided certain compatibility conditions are satisfied between the physical fluxes $\bm{f}, \bm{g}$ and the entropy fluxes $F,G$ \cite{Tadmor1987_2,Tadmor2003}. In the presence of discontinuities the mathematical entropy decays \cite{Tadmor1987_2,Tadmor2003} and satisfies the inequality
\begin{equation}\label{eq:newIneq}
S_t + F_x + G_y \leq 0,
\end{equation}
in the sense of weak solutions to the non-linear PDE \cite{evans2010,Tadmor2003}. The final goal in this subsection is to determine a high-order DGSEM that is entropy stable on conforming meshes.

We first provide a brief overview for the derivation of the standard nodal DGSEM on rectangular grids. Complete details can be found in the book of Kopriva \cite{Kopriva:2009nx}. The DGSEM is derived from the weak form of the conservation laws \eqref{eq:2DconsLaw}. Thus, we multiply by an arbitrary ${L}_2(\Omega)$ test function $\varphi$ and integrate over the domain
\begin{equation}\label{eq:weakForm}
\int\limits_{\Omega}\left(\bm{u}_t+\bm{f}_x+\bm{g}_y\right)\varphi\dxdy = \bm{0},
\end{equation}
where, for convenience, we suppress the $\bm{u}$ dependence of the non-linear flux vectors. We subdivide the domain $\Omega$ into $K$ non-overlapping, geometrically conforming rectangular elements
\begin{equation}
E_k = \left[x_{k,1},x_{k,2}\right]\times[y_{k,1},y_{k,2}],\quad k = 1,\ldots,K.
\end{equation}
This divides the integral over the whole domain into the sum of the integrals over the elements. So, each element contributes
\begin{equation}\label{eq:weakForm2}
\int\limits_{E_k}\left(\bm{u}_t+\bm{f}_x+\bm{g}_y\right)\varphi\dxdy = \bm{0},\quad k = 1,\ldots K,
\end{equation}
to the total integral. Next, we create a scaling transformation between the reference element $E_0=[-1,1]^2$ and each element, $E_k$. For rectangular meshes we create mappings $(X_k,Y_k):E_0 \rightarrow E_k$ such that $\left(X_k(\xi),Y_k(\eta)\right) = (x,y)$ are defined as
\begin{equation}
 X_k(\xi) = x_{k,1} + \frac{\xi +1}{2}\Delta x_k,\quad  Y_k(\eta) = y_{k,1} + \frac{\eta +1}{2}\Delta y_k,
\label{eq:mapping}
\end{equation}
for $k=1,\ldots,K$ where $\Delta x_k=\left(x_{k,2}-x_{k,1}\right)$ and $\Delta y_k = \left(y_{k,2}-y_{k,1}\right)$. Under the transformation \eqref{eq:mapping} the conservation law in physical coordinates \eqref{eq:2DconsLaw} becomes a conservation law in reference coordinates \cite{Kopriva:2009nx}
\begin{equation}
\bm{u}_t + \frac{1}{J}\left[\tilde{\bm{f}}_{\xi} + \tilde{\bm{g}}_{\eta}\right] = \bm{0},
\end{equation}
where
\begin{equation}
J=\frac{\Delta x_k\Delta y_k}{4},\quad\tilde{\bm{f}} = \frac{\Delta y_k}{2}\bm{f},\quad\tilde{\bm{g}} = \frac{\Delta x_k}{2}\bm{g},
\end{equation}
and $k=1,\ldots,K$.

We select the test function $\varphi$ to be a piecewise polynomial of degree $N$ in each spatial direction
\begin{equation}\label{eq:testFunction}
\varphi^k = \sum_{i=0}^N\sum_{j=0}^N\varphi_{ij}^k\ell_i(\xi)\ell_j(\eta),
\end{equation}
on each spectral element $E_k$, but do not enforce continuity at the element boundaries. The interpolating Lagrange basis functions are defined by
\begin{equation}
\ell_i(\xi) = \prod_{\stackrel{j = 0}{j \neq i}}^N \frac{\xi-\xi_j}{\xi_i-\xi_j} \quad\text{for}\quad i=0,\ldots,N,
\label{eq:Lagrange}
\end{equation}
with a similar definition in the $\eta$ direction. The values of $\varphi^k_{ij}$ on each element $E_k$ are arbitrary and linearly independent, therefore the formulation \eqref{eq:weakForm2} is
\begin{equation}
\int\limits_{E_0}\left(J\bm{u}_t+\tilde{\bm{f}}_{\xi}+\tilde{\bm{g}}_{\eta}\right)\ell_i(\xi)\ell_j(\eta)\dxideta = \bm{0},
\end{equation}
where $i,j=0,\ldots,N$.

We approximate the conservative vector $\bm{u}$ and the contravariant fluxes $\tilde{\bm{f}}$, $\tilde{\bm{g}}$ with the same polynomial interpolants of degree $N$ in each spatial direction written in Lagrange form, e.g.,
\begin{equation}
\begin{aligned}
\bm{u}(x,y,t)|_{E_k} &= \bm{u}(\xi,\eta,t) \approx \sum_{i,j = 0}^N \bm{U}_{ij} \ell_i(\xi) \ell_j(\eta)\equiv \bm{U},
\\
\tilde{\bm{f}}\left(\bm{u}(x,y,t)\right)|_{E_k} &= \tilde{\bm{f}}(\xi,\eta,t) \approx  \sum_{i,j = 0}^N \tilde{\bm{F}}_{ij} \ell_i(\xi) \ell_j(\eta)\equiv \tilde{\bm{F}}.
\label{eq:DG-approx}
\end{aligned}
\end{equation}
Any integrals present in the DG approximation are approximated with a high-order Legendre-Gauss-Lobatto (LGL) quadrature rule, e.g.,
\begin{equation}\label{eq:quadrature}
\int\limits_{E_0} J\bm{U}_t\ell_i(\xi)\ell_j(\eta)\dxideta\approx J\!\!\!\sum_{n,m=0}^N\left(\sum_{p,q=0}^N\left(\bm{U}_t\right)_{pq}\ell_p(\xi_n)\ell_q(\eta_m)\right)\ell_i(\xi_n)\ell_j(\eta_m)\omega_n \omega_m
= J\left(\vec{U}_t\right)_{ij}\omega_i \omega_j,
\end{equation}
where $\left\{\xi_i\right\}_{i=0}^N, \left\{\eta_j\right\}_{j=0}^N$ are the LGL quadrature nodes and $\left\{\omega_i\right\}_{i=0}^N,\left\{\omega_j\right\}_{j=0}^N$ are the LGL quadrature weights. Further, we \textit{collocate} the interpolation and quadrature nodes which enables us to exploit that the Lagrange basis functions \eqref{eq:Lagrange} are discretely orthogonal and satisfy the Kronecker delta property, i.e., $\ell_j(\xi_i) = \delta_{ij}$ with $\delta_{ij} = 1$ for $i=j$ and $\delta_{ij}=0$ for $i\neq j$ to simplify \eqref{eq:quadrature}.

For spectral element methods where the nodes include the boundary of the reference space ($\xi_0=\eta_0=-1$ and $\xi_N=\eta_N=1$), the discrete derivative matrix $\D$ and the discrete mass matrix $\mass$ satisfy the summation-by-parts (SBP) property \cite{Carpenter1996}
\begin{equation}
\mass \D + (\mass \D)^T = \mat Q+\mat Q^T = \mat B := \text{diag} (-1,0,\ldots,0,1).
\label{eq:SBP}
\end{equation}
By considering LGL quadrature, we obtain a diagonal mass matrix
\begin{equation}
\mass=\text{diag} (\omega_0, \ldots , \omega_N),
\label{Mmatrix}
\end{equation}
with positive weights for any polynomial order \cite{Gassner2013}. Note, that the mass matrix is constructed by performing mass lumping. We also define the SBP matrix $\mat Q$ and the boundary matrix $\mat B$ in \eqref{eq:SBP}. The SBP property \eqref{eq:SBP} gives the relation
\begin{equation}\label{eq:otherSBP}
\D = \mass^{-1}\mat B - \mass^{-1}\D^T\mass,
\end{equation}
where we use the fact that the mass matrix $\mass$ is positive definite and invertible. 

By rewriting the polynomial derivative matrix as \eqref{eq:otherSBP} we can move discrete derivatives off the contravariant fluxes and onto the test function. This generates surface and volume contributions in the approximation. To resolve the discontinuities that naturally occur at element interfaces in DG methods we introduce the numerical flux functions $\tilde{\bm{F}}^*,\tilde{\bm{G}}^*$. We apply the SBP property \eqref{eq:otherSBP} again to move derivatives off the test function back onto the contravariant fluxes. This produces the strong form of the nodal DGSEM
\begin{equation}\label{eq:standardDG}
\begin{aligned}
J\left(\bm{U}_t\right)_{ij} &+ \frac{1}{\mass_{ii}}\left(\delta_{iN}\left[\tilde{\bm{F}}^{*}(1,\eta_j;\hat{n}) - \tilde{\bm{F}}_{Nj}\right] - \delta_{i0}\left[\tilde{\bm{F}}^{*}(-1,\eta_j;\hat{n}) - \tilde{\bm{F}}_{0j}\right]\right)+\sum_{m=0}^N \D_{im}\tilde{\bm{F}}_{mj}\\
&+ \frac{1}{\mass_{jj}}\left(\delta_{jN}\left[\tilde{\bm{G}}^{*}(\xi_i,1;\hat{n}) - \tilde{\bm{G}}_{iN}\right] - \delta_{j0}\left[\tilde{\bm{G}}^{*}(\xi_i,-1;\hat{n}) - \tilde{\bm{G}}_{i0}\right]\right)+\sum_{m=0}^N \D_{jm}\tilde{\bm{G}}_{im}=\bm{0},\\
\end{aligned}
\end{equation}
for each LGL node with $i,j=0,\ldots,N$. We introduce notation in \eqref{eq:standardDG} for the evaluation of the contravariant numerical flux functions in the normal direction along each edge of the reference element at the relevant LGL nodes, e.g. $\tilde{\bm{F}}^{*}(1,\eta_j;\hat{n})$ for $j=0,\ldots,N$.  Note that selecting the test function to be the tensor product basis \eqref{eq:testFunction} decouples the derivatives in each spatial direction.

Next, we extend the standard strong form DGSEM \eqref{eq:standardDG} into a split form DGSEM \cite{Carpenter2014,Gassner2016} framework. Split formulations of the DG approximation offer increased robustness, e.g. \cite{Gassner2016,Yee2017}, as well as increased flexibility in the DGSEM to satisfy auxiliary properties such as entropy conservation or entropy stability \cite{Carpenter2014,Gassner2016,Ray2017}. To create a split form DGSEM we rewrite the contributions of the volume integral, for example in the $\xi-$direction, by
\begin{equation}\label{eq:newVolInt}
\sum_{m=0}^N \D_{im}\tilde{\bm{F}}_{mj} \approx 2\sum_{m=0}^N \D_{im}\tilde{\bm{F}}^{\#}\left(\bm{U}_{ij},\bm{U}_{mj}\right),
\end{equation}
for $i,j=0,\ldots,N$ where we introduce a two-point, symmetric numerical volume flux $\tilde{\bm{F}}^{\#}$ \cite{Gassner2016}. This step creates a baseline split form DGSEM
\begin{equation}\label{eq:splitDG}
\resizebox{\hsize}{!}{$
\begin{aligned}
J\left(\bm{U}_t\right)_{ij} &+ \frac{1}{\mass_{ii}}\left(\delta_{iN}\left[\tilde{\bm{F}}^{*}(1,\eta_j;\hat{n}) - \tilde{\bm{F}}_{Nj}\right] - \delta_{i0}\left[\tilde{\bm{F}}^{*}(-1,\eta_j;\hat{n}) - \tilde{\bm{F}}_{0j}\right]\right)+2\sum_{m=0}^N \D_{im}\tilde{\bm{F}}^{\#}\left(\bm{U}_{ij},\bm{U}_{mj}\right)\\
&+ \frac{1}{\mass_{jj}}\left(\delta_{jN}\left[\tilde{\bm{G}}^{*}(\xi_i,1;\hat{n}) - \tilde{\bm{G}}_{iN}\right] - \delta_{j0}\left[\tilde{\bm{G}}^{*}(\xi_i,-1;\hat{n}) - \tilde{\bm{G}}_{i0}\right]\right)+2\sum_{m=0}^N \D_{jm}\tilde{\bm{G}}^{\#}\left(\bm{U}_{ij},\bm{U}_{im}\right)=\bm{0},\\
\end{aligned}$}
\end{equation}
that can be used to create an entropy conservative/stable approximation. All that remains is the precise definition of the numerical surface and volume flux functions.

The construction of a high-order entropy conserving/stable DGSEM relies on the fundamental finite volume framework developed by Tadmor \cite{tadmor:1984,Tadmor1987_2}. An entropy conservative (EC) numerical flux function in the $\xi-$direction, $f^{\EC}$, is derived by satisfying the condition \cite{Tadmor2003}
\begin{equation}\label{eq:entCondition}
\jump{\bm{v}}^T\bm{f}^{\EC} = \jump{\Psi^f},
\end{equation}
where $\bm{v}$ are the entropy variables \eqref{eq:entVars}, $\Psi^{f}$ is the entropy flux potential
\begin{equation}\label{eq:entPotential}
\Psi^f = \bm{v}\cdot\bm{f} - F,
\end{equation}
and
\begin{equation}\label{jump}
\jump{\cdot} = (\cdot)_R - (\cdot)_L,
\end{equation}
is the jump operator between a left and right state. Note that \eqref{eq:entCondition} is a single condition on the numerical flux vector $\bm{f}^{\EC}$, so there are many potential solutions for the entropy conserving flux vector. However, we reduce the number of possible solutions with the additional requirement that the numerical flux must be consistent, i.e. $\bm{f}^{\EC}(\bm{u},\bm{u}) = \bm{f}(\bm{u})$. Many such entropy conservative numerical flux functions are available for systems of hyperbolic conservation laws, e.g. the Euler equations \cite{Chandra2013,Ismail2009}. The entropy conservative flux function creates a baseline scheme to which dissipation can be added and guarantee discrete satisfaction of the entropy inequality (entropy stability), e.g. \cite{Chandra2013,Fjordholm2011,Wintermeyer2016}.

Remarkably, Fisher et al. \cite{Fisher2013} and Fisher and Carpenter \cite{Fisher2013b} demonstrated that selecting an entropy conservative finite volume flux for the numerical surface and volume fluxes in a high-order SBP discretization is enough to guarantee that the property of entropy conservation remains. As mentioned earlier, the DGSEM constructed on the LGL nodes is an SBP method. Entropy stability of the high-order DG approximation is guaranteed by adding proper numerical dissipation in the numerical surface fluxes, similar to the finite volume case. Thus, the final form of the entropy conservative DGSEM on conforming meshes is
\begin{equation}\label{eq:ECDG}
\resizebox{\hsize}{!}{$
\begin{aligned}
J\left(\bm{U}_t\right)_{ij} &+ \frac{1}{\mass_{ii}}\left(\delta_{iN}\left[\tilde{\bm{F}}^{\EC}(1,\eta_j;\hat{n}) - \tilde{\bm{F}}_{Nj}\right] - \delta_{i0}\left[\tilde{\bm{F}}^{\EC}(-1,\eta_j;\hat{n}) - \tilde{\bm{F}}_{0j}\right]\right)+2\sum_{m=0}^N \D_{im}\tilde{\bm{F}}^{\EC}\left(\bm{U}_{ij},\bm{U}_{mj}\right)\\
&+ \frac{1}{\mass_{jj}}\left(\delta_{jN}\left[\tilde{\bm{G}}^{\EC}(\xi_i,1;\hat{n}) - \tilde{\bm{G}}_{iN}\right] - \delta_{j0}\left[\tilde{\bm{G}}^{\EC}(\xi_i,-1;\hat{n}) - \tilde{\bm{G}}_{i0}\right]\right)+2\sum_{m=0}^N \D_{jm}\tilde{\bm{G}}^{\EC}\left(\bm{U}_{ij},\bm{U}_{im}\right)=\bm{0},\\
\end{aligned}$}
\end{equation}
where we have made the replacement of the numerical surface and volume fluxes to be a two-point, symmetric EC flux that satisfies \eqref{eq:entCondition}. \\

\begin{rem}
We note that the entropy conservative DGSEM \eqref{eq:ECDG} is equivalent to a SBP finite difference method with boundary coupling through simultaneous approximation terms (SATs), e.g. \cite{Fisher2013b,Fisher2013}.
\end{rem}

In summary, we demonstrated that special attention was required for the volume contribution in the nodal DGSEM to create a split form entropy conservative method. Additionally, the SBP property was necessary to apply previous results from Fisher et al. \cite{Fisher2013} and guarantee entropy conservation at high-order. For the conforming mesh case the surface contributions required little attention. We simply replaced the numerical surface flux with an appropriate EC flux from the finite volume community. However, we next consider non-conforming DG methods with the flexibility to have differing polynomial order or hanging nodes at element interfaces.

%%%%%%%%%%%%%%%%%%%%%%%%%%%%%%%%%%%%%%%%%%%%%%%%%%%%%%%%%%%%%%%%%%%%%%%%%%%%%%%%%%
\subsection{Non-Conforming DGSEM}
%%%%%%%%%%%%%%%%%%%%%%%%%%%%%%%%%%%%%%%%%%%%%%%%%%%%%%%%%%%%%%%%%%%%%%%%%%%%%%%%%%

We consider the standard DGSEM in strong form \eqref{eq:standardDG} to discuss the commonly used \textit{mortar method} for non-conforming high-order DG approximations \cite{Tan2012,Kopriva1996b}. The mortar method allows for the polynomial order to differ between elements (Fig. \ref{fig:NonConfMesh}(a)), sometimes called $p$ refinement or algebraic non-conforming, as well as meshes that contain hanging nodes (Fig. \ref{fig:NonConfMesh}(b)), sometimes called $h$ refinement or geometric non-conforming, or both for a fully $h/p$ non-conforming approach (Fig. \ref{fig:NonConfMesh}(c)). For ease of presentation we assume that the polynomial order within an element is the same in each spatial direction. Note, however, due to the tensor product decoupling of the approximation (e.g. \eqref{eq:ECDG}) the mortar method could allow the polynomial order to differ within an element in each direction $\xi$ and $\eta$ as well.
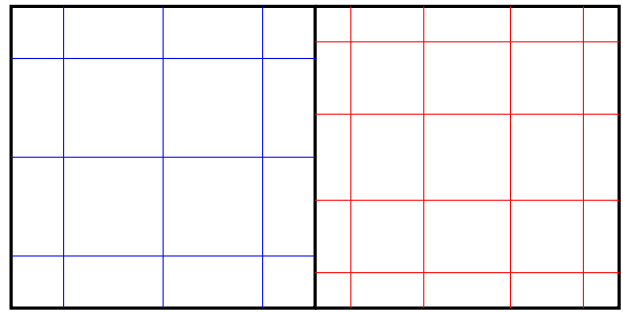
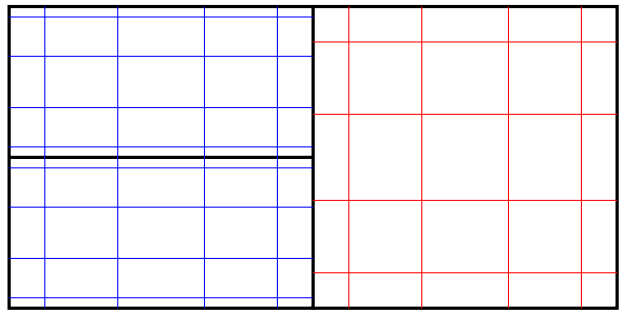
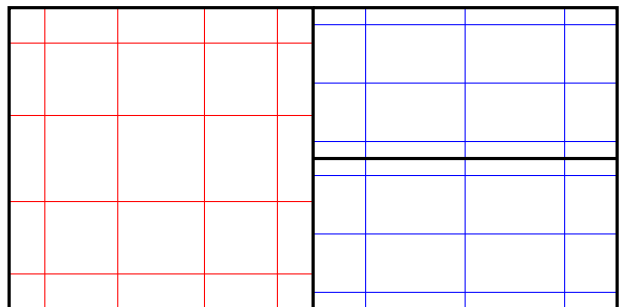
\begin{figure}[!ht]
\begin{center}
\subfloat[Neighboring elements with different nodal distributions]{
\begin{tikzpicture}[scale=1]
\draw[very thick] (-4,-2) -- (0,-2) -- (0,2) -- (-4,2) -- cycle;
\draw[very thick] (0,-2) -- (4,-2) -- (4,2) -- (0,2) ;
%Gricell A
\color[rgb]{0,0,1}
\draw[] (-3.3093,2) -- (-3.3093,-2) ;
\draw[] (-2,2) -- (-2,-2) ;
\draw[] (-0.6907,2) -- (-0.6907,-2) ;
\draw[] (-4,1.3093) -- (0,1.3093) ;
\draw[] (-4,0) -- (0,0) ;
\draw[] (-4,-1.3093) -- (0,-1.3093) ;
%Gridcell B
\color[rgb]{1,0,0}
\draw[] (3.5301,2) -- (3.5301,-2) ;
\draw[] (2.5705,2) -- (2.5705,-2) ;
\draw[] (1.4295,2) -- (1.4295,-2) ;
\draw[] (0.4699,2) -- (0.4699,-2) ;
\draw[] (4,1.5301) -- (0,1.5301) ;
\draw[] (4,0.5705) -- (0,0.5705) ;
\draw[] (4,-1.5301) -- (-0,-1.5301) ;
\draw[] (4,-0.5705) -- (-0,-0.5705) ;
\end{tikzpicture}
}
\\
\subfloat[Neighboring elements with a hanging node]{
\begin{tikzpicture}[scale=1]
\draw[very thick] (-4,-2) -- (0,-2) -- (0,2) -- (-4,2) -- cycle;
\draw[very thick] (0,-2) -- (4,-2) -- (4,2) -- (0,2) ;
\draw[very thick] (-4,0) -- (0,0);
%Gricell A
\color[rgb]{0,0,1}
\draw[] (-3.5301,2) -- (-3.5301,-2) ;
\draw[] (-2.5705,2) -- (-2.5705,-2) ;
\draw[] (-1.4295,2) -- (-1.4295,-2) ;
\draw[] (-0.4699,2) -- (-0.4699,-2) ;
\draw[] (-4,-1.8611) -- (0,-1.8611);
\draw[] (-4,-1.3399) -- (0,-1.3399);
\draw[] (-4,-0.1389) -- (0,-0.1389);
\draw[] (-4,-0.6601) -- (0,-0.6601);
\draw[] (-4,1.3399) -- (0,1.3399);
\draw[] (-4,1.8611) -- (0,1.8611);
\draw[] (-4,0.1389) -- (0,0.1389);
\draw[] (-4,0.6601) -- (0,0.6601);
%Gridcell B
\color[rgb]{1,0,0}
\draw[] (3.5301,2) -- (3.5301,-2) ;
\draw[] (2.5705,2) -- (2.5705,-2) ;
\draw[] (1.4295,2) -- (1.4295,-2) ;
\draw[] (0.4699,2) -- (0.4699,-2) ;
\draw[] (4,1.5301) -- (0,1.5301) ;
\draw[] (4,0.5705) -- (0,0.5705) ;
\draw[] (4,-1.5301) -- (-0,-1.5301) ;
\draw[] (4,-0.5705) -- (-0,-0.5705) ;
\end{tikzpicture}
}
\\
\subfloat[Neighboring elements with a hanging node and differing polynomial order]{
\begin{tikzpicture}[scale=1]
%Gricell A_1
\color[rgb]{0,0,1}
\draw[] (7.3093,2) -- (7.3093,-2) ;
\draw[] (6,2) -- (6,-2) ;
\draw[] (4.6907,2) -- (4.6907,-2) ;
\draw[] (4,1) -- (8,1);
\draw[] (4,1.774597) -- (8,1.774597);
\draw[] (4,0.2254) -- (8,0.2254);
\draw[] (4,-1) -- (8,-1);
\draw[] (4,-1.774597) -- (8,-1.774597);
\draw[] (4,-0.2254) -- (8,-0.2254);
%Gridcell B
\color[rgb]{1,0,0}
\draw[] (3.5301,2) -- (3.5301,-2) ;
\draw[] (2.5705,2) -- (2.5705,-2) ;
\draw[] (1.4295,2) -- (1.4295,-2) ;
\draw[] (0.4699,2) -- (0.4699,-2) ;
\draw[] (4,1.5301) -- (0,1.5301) ;
\draw[] (4,0.5705) -- (0,0.5705) ;
\draw[] (4,-1.5301) -- (-0,-1.5301) ;
\draw[] (4,-0.5705) -- (-0,-0.5705) ;
%borders
\color[rgb]{0,0,0}
\draw[very thick] (0,-2) -- (4,-2) -- (4,2) -- (0,2) --cycle;
\draw[very thick] (4,-2) -- (8,-2) -- (8,2) -- (4,2);
\draw[very thick] (4,0) -- (8,0);
\end{tikzpicture}
}
\caption{Examples of simple meshes with (a) $p$ refinement (b) $h$ refinement or (c) $h/p$ refinement}
\label{fig:NonConfMesh}
\end{center}
\end{figure}

The key to the non-conforming spectral element approximation is how the numerical fluxes between neighbor interfaces are treated. In the conforming approximation of the previous section the interface points between two neighboring elements coincide while the numerical solution across the interface was discontinuous. This allowed for a straightforward definition of unique numerical surface fluxes to account for how information is transferred between neighbors. It is then possible to determine numerical surface fluxes that guaranteed entropy conservation/stability of the conforming approximation.

The only difference between the conforming and non-conforming approximations is precisely how the numerical surface fluxes are computed along the interfaces. In the non-conforming cases of $h/p$ refinement (Fig. \ref{fig:NonConfMesh}(a)-(c)), the interface nodes may not match. So, a point-by-point transfer of information cannot be made between an element and its neighbors. To remedy this the mortar method ``cements'' together the neighboring ``bricks'' by connecting them through an intermediate one-dimensional construct, denoted by $\Xi$, see Fig. \ref{fig:MortarIdea}(a)-(b).
\begin{figure}[!ht]
\begin{center}
\subfloat[Three element $h/p$ non-conforming mesh]{
\begin{tikzpicture}[scale=1.0]
% elements
\draw[very thick] (-1,0) rectangle (5.25,4.25);
\draw[very thick] (2.5,0) -- (2.5,4.25);
\draw[very thick] (-1.0,2.15) -- (2.5,2.15);
% labels
\draw (0.825,3.15) node {$E_1$};
\draw (0.825,1.1) node {$E_2$};
\draw (3.8,2.125) node {$E_3$};
\end{tikzpicture}
}
\quad
\subfloat[Mortar projections]{
\begin{tikzpicture}[scale=1.0]
% elements
\draw[very thick] (0,3)  rectangle (4,5);
\draw[very thick] (0,-1) rectangle (4,1);
\draw[very thick] (7,-1) rectangle (10,5);
% mortars
\draw[very thick,pattern= north east lines] (-0.15,1.85) rectangle (4.15,2.15);
\draw[very thick,pattern= north east lines] (5.35,2.85)  rectangle (5.65,5.15);
\draw[very thick,pattern= north east lines] (5.35,-1.15)  rectangle (5.65,1.15);
% labels
\draw (2,4)        node {$E_1$};
\draw (2,0)        node {$E_2$};
\draw (8.5,2)      node {$E_3$};
\draw (-0.5,2)     node {$\Xi_{12}$};
\draw (5.55,5.45)  node {$\Xi_{13}$};
\draw (5.55,-1.45) node {$\Xi_{23}$};
% arrows
\draw[very thick,triangle 45-] (5.3,4.5)  -- (4.1,4.5);
\draw[very thick,triangle 45-] (4.05,3.5) -- (5.25,3.5);
\draw[very thick,triangle 45-] (5.7,4.5) -- (6.9,4.5);
\draw[very thick,triangle 45-] (6.925,3.5) -- (5.75,3.5);
\draw[very thick,triangle 45-] (5.3,0.5) -- (4.1,0.5);
\draw[very thick,triangle 45-] (4.05,-0.5) -- (5.25,-0.5);
\draw[very thick,triangle 45-] (5.7,0.5) -- (6.9,0.5);
\draw[very thick,triangle 45-] (6.925,-0.5) -- (5.75,-0.5);
\draw[very thick,triangle 45-] (1,1.8) -- (1,1.1);
\draw[very thick,triangle 45-] (3,1.1) -- (3,1.8);
\draw[very thick,triangle 45-] (1,2.2) -- (1,2.9);
\draw[very thick,triangle 45-] (3,2.9) -- (3,2.2);
\end{tikzpicture}
}
\caption{Diagram depicting communication of data to and from mortars between three non-conforming elements.}
\label{fig:MortarIdea}
\end{center}
\end{figure}
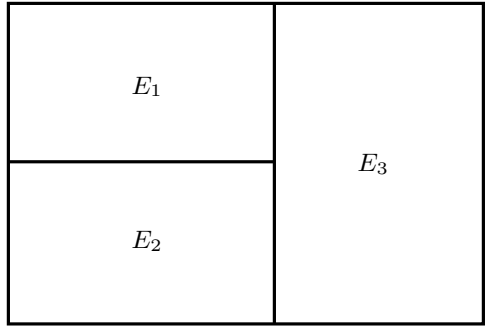
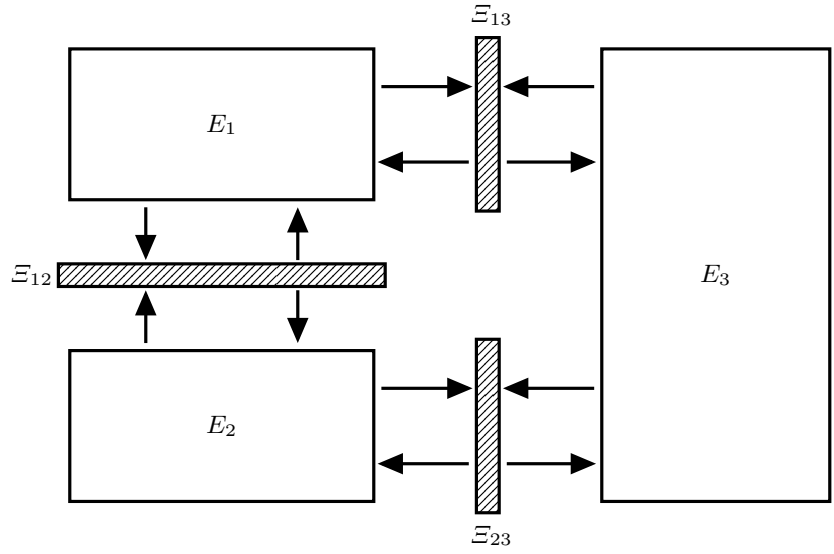

In this overview we only discuss the coupling of the $p$ refinement case (Fig. \ref{fig:NonConfMesh}(a)), but the process is similar for the $h$ refinement case and is nicely described by Kopriva \cite{Kopriva1996b}. Also, the extension to curvilinear elements is briefly outlined. We distinguish them as the polynomial order on the left, $N_L$, and right, $N_R$. The polynomial on the mortar is chosen to be $N_{\Xi} = \max(N_L,N_R)$ \cite{Kopriva1996b,Kopriva2002}. Without loss of generality we assume that $N_L<N_R$, as depicted in Fig. \ref{fig:NonConfMesh}(a), such that $N_{\Xi}=N_R$. The construction of the numerical flux at such a non-conforming interface follows three basic steps:
\begin{enumerate}
\item Because the polynomial order on the right ($R$) and the mortar match, we simply copy the data. From the left ($L$) element we use a discrete or exact $L_2$ projection to move the solution from the element onto the mortar $\Xi$.
\item The node distributions on the mortar match and we compute the interface numerical flux similar to the conforming mesh case.
\item Finally, we project the numerical flux from the mortar back to each of the elements. Again, the left element uses a discrete or exact $L_2$ projection and the right element simply copies the data.
\end{enumerate}
We collect these steps visually in Fig. \ref{fig:projections} and introduce the notation for the four projection operations to be $\PLtoX$, $\PXtoL$, $\PRtoX$, $\PXtoR$. For this example we note that the right to mortar and inverse projections are the appropriate sized identity matrix, i.e $\PRtoX = \PXtoR = \mat{I}_{N_R}$. We provide additional details in Appendix \ref{sec:App B} regarding the mortar method for $p$ non-conforming DG methods and clarify the difference between interpolation and projection operators.
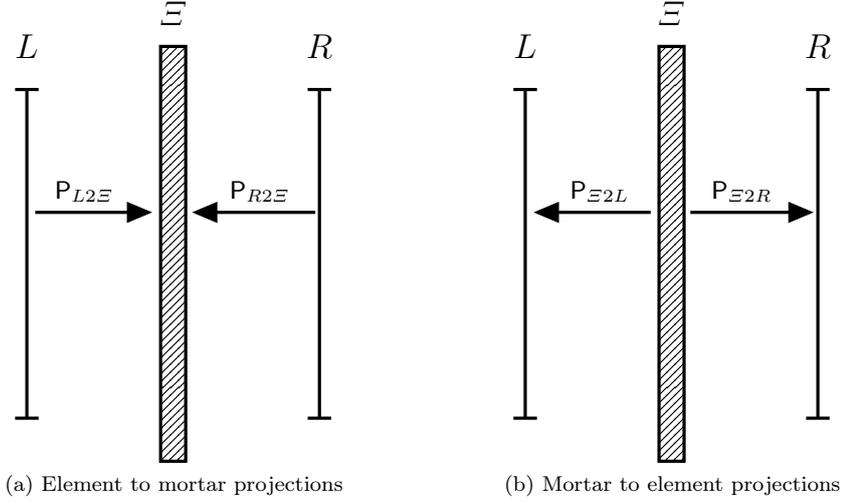
\begin{figure}[!ht]
\begin{center}
\subfloat[Element to mortar projections]{
\begin{tikzpicture}[scale=1.1]
\draw[very thick,|-|] (-1,2) -- (-1,-2);
\draw[very thick,|-|] (2.5,-2) -- (2.5,2);
\draw[very thick,pattern= north east lines] (0.9,-2.5) rectangle (0.6,2.5);
\draw (2.5,2.5) node {\LARGE{$R$}};
\draw (-1,2.5)  node {\LARGE{$L$}};
\draw (0.75,2.9) node {\LARGE{$\Xi$}};
% arrows
\draw[very thick,triangle 45-] (0.975,0.5) -- (2.45,0.5);
\draw[very thick,triangle 45-] (0.5,0.5) -- (-0.9,0.5);
\draw (-0.3125,0.75) node {$\PLtoX$};
\draw ( 1.7875,0.75) node {$\PRtoX$};
\end{tikzpicture}
}\qquad\qquad\qquad
\subfloat[Mortar to element projections]{
\begin{tikzpicture}[scale=1.1]
\draw[very thick,|-|] (-1,2) -- (-1,-2);
\draw[very thick,|-|] (2.5,-2) -- (2.5,2);
\draw[very thick,pattern= north east lines] (0.9,-2.5) rectangle (0.6,2.5);
\draw (2.5,2.5) node {\LARGE{$R$}};
\draw (-1,2.5)  node {\LARGE{$L$}};
\draw (0.75,2.9) node {\LARGE{$\Xi$}};
% arrows
\draw[very thick,triangle 45-] (2.45,0.5) -- (0.975,0.5);
\draw[very thick,triangle 45-] (-0.9,0.5) -- (0.5,0.5);
\draw (-0.1125,0.75) node {$\PXtoL$};
\draw ( 1.5875,0.75) node {$\PXtoR$};
\end{tikzpicture}
}
\caption{Schematic of mortar projections for the case of $p$ refinement.}
\label{fig:projections}
\end{center}
\end{figure}

%%%%%%%%%%%%%%%%%%%%%%%%%%%%%%%%%%%%%%%%%%%%%%%%%%%%%%%%%%%%%%%%%%%%%%%%%%%%%%%%%%
\subsection{Interaction of the Standard Mortar Method with Entropy Conservative DGSEM}
%%%%%%%%%%%%%%%%%%%%%%%%%%%%%%%%%%%%%%%%%%%%%%%%%%%%%%%%%%%%%%%%%%%%%%%%%%%%%%%%%%

With the machinery of the mortar method now in place to handle non-conforming interfaces we are equipped to revisit the discussion of the entropy conservative DGSEM. For linear problems, where entropy conservation becomes \textit{energy} conservation, it is known that the mortar method is sufficient to extend the energy conserving DG schemes to non-conforming meshes, e.g. \cite{Friedrich2016,Kozdon2016,Mattsson2010b}. This is because no non-linearities are present and there is no coupling of the left and right solution states in the central numerical flux. However, for non-linear problems we replace this simple central numerical flux with a more complicated entropy conservative numerical flux that features possible polynomial or rational non-linearities as well as strong cross coupling between the left and right solution states, e.g. \cite{Chandra2013,Fjordholm2011,Gassner2013}. This introduces complications when applying the standard mortar method to entropy conservative DG methods.

As a simple example, consider the Burgers' equation which is equipped with an entropy conservative numerical flux in the $\xi-$direction of the form \cite{Gassner2013}
\begin{equation}
F^{\EC} = \frac{1}{6}\left(U_L^2+U_LU_R+U_R^2\right).
\end{equation}
Continuing the assumption of $N_L<N_R$ from the previous subsection we find the numerical flux computed on the mortar is
\begin{equation}\label{eq:BurgersFlux}
F^{\EC}_{\Xi} = \frac{1}{6}\left[\left(\PLtoX U_L\right)^2+\left(\PLtoX U_L\right)U_R+U_R^2\right].
\end{equation}
The back projections of the mortar numerical flux \eqref{eq:BurgersFlux} onto the left and right elements are
\begin{equation}
F^{\EC}_L = \PXtoL F^{\EC}_{\Xi},\quad F^{\EC}_{R}=F^{\EC}_{\Xi}.
\end{equation}
However, it is clear that the projected numerical fluxes will exhibit unpredictable behavior with regards to entropy. For example, because the entropy conservative flux was derived for conforming meshes with point-to-point information transfer, it is not obvious how the operation to compute the square of the projection of $U_L$ and then $L_2$ project the numerical flux back to the left element will change the entropy.

The focus of this article is to remedy these issues and happily marry the entropy conservative DGSEM with a $h/p$ non-conforming mortar-type method. To achieve this goal requires careful consideration and construction of the projection operators to move solution information between non-conforming element neighbors. Our main results are presented in the next section. First, in Sec. \ref{sec:p}, we address the issues associated with $p$ refinement similar to Carpenter et al. \cite{Carpenter2016} only in the context of a split form DG framework. We build on the $p$ refinement result to construct projections that guarantee entropy conservation for in the case of $h$ refinement in Sec. \ref{sec:h}. Then, Sec. \ref{sec:Dissipation} describes how additional dissipation can be included at non-conforming interfaces to guarantee entropy stability. Finally, we verify the theoretical derivations through a variety of numerical test cases in Sec. \ref{sec:numResults}.

%%%%%%%%%%%%%%%%%%%%%%%%%%%%%%%%%%%%%%%%%%%%%%%%%%%%%%%%%%%%%%%%%%%%%%%%%%%%%%%%%%
\section{Entropy Stable $h/p$ Non-Conforming DGSEM}\label{sec:interfaces}
%%%%%%%%%%%%%%%%%%%%%%%%%%%%%%%%%%%%%%%%%%%%%%%%%%%%%%%%%%%%%%%%%%%%%%%%%%%%%%%%%%

Our goal is to develop a high-order numerical approximation that conserves the primary quantities of interest (like mass) as well as obey the second law of thermodynamics. In the continuous analysis, neglecting boundary conditions, we know for general solutions that the main quantities are conserved and the entropy can be dissipated (in the mathematical sense)
\begin{equation}\label{BothConditionsCon}
\int\limits_\Omega {u}^{\eqInd}_t~\textrm{d}\Omega = 0,\qquad
\int\limits_\Omega \dEnt~\textrm{d}\Omega \le 0,
\end{equation}
for each equation, $\eqInd=1,\ldots,M$, in the non-linear system. We aim to develop a DGSEM that mimics \eqref{BothConditionsCon} on rectangular meshes in the case of general $h/p$ non-conforming approximations.

As discussed previously, the most important component of a non-conforming method for entropy stable approximations is the coupling of the solution at interfaces through numerical fluxes. For convenience we clarify the notation of the numerical fluxes in the entropy conservative approximation \eqref{eq:ECDG} along interfaces in Fig. \ref{fig:Numflux}.
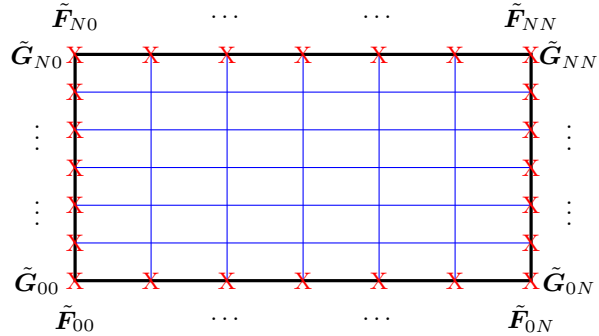
\begin{figure}[!ht]
\begin{center}
\begin{tikzpicture}[scale=1]
\draw[very thick] (-3,0) -- (3,0) -- (3,3) -- (-3,3) -- cycle;
%Gricell A
\color[rgb]{0,0,1}
\draw[] (-2,3) -- (-2,0) ;
\draw[] (-1,3) -- (-1,0) ;
\draw[] (0,3) -- (0,0) ;
\draw[] (1,3) -- (1,0) ;
\draw[] (2,3) -- (2,0) ;
\draw[] (-3,2.5) -- (3,2.5) ;
\draw[] (-3,2) -- (3,2) ;
\draw[] (-3,1.5) -- (3,1.5) ;
\draw[] (-3,1) -- (3,1) ;
\draw[] (-3,0.5) -- (3,0.5) ;
\color[rgb]{1,0,0}
\node at (-3,0) {X} ;
\node at (-3,0.5) {X} ;
\node at (-3,1) {X} ;
\node at (-3,1.5) {X} ;
\node at (-3,2) {X} ;
\node at (-3,2.5) {X} ;
\node at (-3,3) {X} ;
\node at (3,0) {X} ;
\node at (-2,0) {X} ;
\node at (-1,0) {X} ;
\node at (0,0) {X} ;
\node at (1,0) {X} ;
\node at (2,0) {X} ;
\node at (3,2.5) {X} ;
\node at (3,2) {X} ;
\node at (3,1.5) {X} ;
\node at (3,1) {X} ;
\node at (3,0.5) {X} ;
\node at (3,3) {X} ;
\node at (2,3) {X} ;
\node at (1,3) {X} ;
\node at (0,3) {X} ;
\node at (-1,3) {X} ;
\node at (-2,3) {X} ;
\color[rgb]{0,0,0}
\node at (-3,-0.5) {$\tilde{\bm{F}}_{00}$} ;
\node at (-1,-0.5) {\dots} ;
\node at (1,-0.5) {\dots} ;
\node at (3,-0.5) {$\tilde{\bm{F}}_{0N}$} ;
\node at (-3,3.5) {$\tilde{\bm{F}}_{N0}$} ;
\node at (-1,3.5) {\dots} ;
\node at (1,3.5) {\dots} ;
\node at (3,3.5) {$\tilde{\bm{F}}_{NN}$} ;
\node at (-3.5,0) {$\tilde{\bm{G}}_{00}$} ;
\node at (-3.5,1) {\vdots} ;
\node at (-3.5,2) {\vdots} ;
\node at (-3.5,3) {$\tilde{\bm{G}}_{N0}$} ;
\node at (3.5,0) {$\tilde{\bm{G}}_{0N}$} ;
\node at (3.5,1) {\vdots} ;
\node at (3.5,2) {\vdots} ;
\node at (3.5,3) {$\tilde{\bm{G}}_{NN}$} ;
\end{tikzpicture}
\caption{Entropy conservative numerical fluxes at the interfaces of an element.}
\label{fig:Numflux}
\end{center}
\end{figure}

We seek an approximation that discretely preserves primary conservation and discrete entropy stability. The definition of this continuous property \eqref{BothConditionsCon} is translated into the discrete by summing over all elements to be
\begin{align}
\sum\limits_{\mathrm{all\,elements}} J\! \sum_{i,j=0}^N\omega_i\omega_j \left({U}^\eqInd_{t}\right)_{ij}& = {0}, \text{ (primary conservation)}\label{BothConditionsMom},\\
\sum\limits_{\mathrm{all\,elements}} J\! \sum_{i,j=0}^N\omega_i\omega_j \left(S_{t}\right)_{ij}& \le 0, \text{ (entropy stability)}\label{BothConditionsEntStab},
\end{align}
where $\left(S_{t}\right)_{ij}$ is a discrete evaluation of the time derivative of the entropy function.

While our goal is the construction of an entropy stable scheme, we will first derive an entropy conservative scheme for smooth solutions, meaning that
\begin{align}
\sum\limits_{\mathrm{all\,elements}} J\! \sum_{i,j=0}^N\omega_i\omega_j \left(S_{t}\right)_{ij}=0, \text{ (entropy conservation)}.\label{BothConditionsEntCon}
\end{align}
After deriving an entropy conservative scheme we can obtain an entropy stable scheme by including carefully constructed dissipation within the numerical surface flux as described in Sec. \ref{sec:Dissipation}.

To derive an approximation which conserves the primary quantities and is entropy stable we must examine the discrete growth in the primary quantities and entropy in a single element. \\

\begin{lem}\label{thm1} We assume that the two point volume flux satisfies the entropy conservation condition \eqref{eq:entCondition}. The discrete growth on a single element of the primary quantities and the entropy of the DG discretization \eqref{eq:ECDG} are
\begin{align}
J \sum_{i,j=0}^N\omega_i\omega_j\left({U}^\eqInd_{t}\right)_{ij}=&-\sum_{j=0}^N\omega_j\left(\tilde{{F}}^{\EC,\eqInd}_{Nj}-\tilde{{F}}^{\EC,\eqInd}_{0j}\right)-\sum_{i=0}^N\omega_i\left(\tilde{{G}}^{\EC,\eqInd}_{iN}-\tilde{{G}}^{\EC,\eqInd}_{i0}\right),\label{ThmdU}
\end{align}
where $\eqInd=1,\ldots,M$ and
\begin{align}
J \sum_{i,j=0}^N\omega_i\omega_j\left(\Ent_{t}\right)_{ij}=&-\sum_{j=0}^N\omega_j\left(\sum\limits_{\eqInd=1}^M{V}_{Nj}^{\eqInd}\tilde{{F}}^{\EC,\eqInd}_{Nj}-\tilde{\Psi}^f_{Nj}-\left(\sum\limits_{\eqInd=1}^M{V}_{0j}^{\eqInd}\tilde{{F}}^{\EC,\eqInd}_{0j}-\tilde{\Psi}^f_{0j}\right)\right)\notag\\
&-\sum_{i=0}^N\omega_i\left(\sum\limits_{\eqInd=1}^M{V}_{iN}^{\eqInd}\tilde{{G}}^{\EC,\eqInd}_{iN}-\tilde{\Psi}^g_{iN}-\left(\sum\limits_{\eqInd=1}^M{V}_{i0}^{\eqInd}\tilde{{G}}^{\EC,\eqInd}_{i0}-\tilde{\Psi}^g_{i0}\right)\right)\label{ThmdEnt},
\end{align}
rescpectively.
\end{lem}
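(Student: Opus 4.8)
The strategy is to contract the semi-discrete scheme \eqref{eq:ECDG} against suitable weights and exploit the SBP property \eqref{eq:SBP} together with the symmetry, consistency, and entropy conservation condition \eqref{eq:entCondition} of the two-point flux. I would treat the two identities separately, establishing the primary-conservation statement first since its cancellation structure recurs in the entropy case.

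For the primary-conservation identity \eqref{ThmdU}, the plan is to multiply \eqref{eq:ECDG} by $\omega_i\omega_j$ and sum over all nodes $i,j=0,\ldots,N$. Since the mass matrix is diagonal with $\mass_{ii}=\omega_i$, the factor $1/\mass_{ii}$ in the surface terms cancels and the Kronecker deltas collapse the surface sums onto the boundary nodes. The work lies in the volume term $2\sum_{i,m}\omega_i\D_{im}\tilde{F}^{\EC}(U_{ij},U_{mj})$: I would relabel $i\leftrightarrow m$ using the symmetry of the flux, average the two expressions, and apply the SBP relation $\omega_i\D_{im}+\omega_m\D_{mi}=\mat B_{im}$ so that only the boundary matrix $\mat B=\diag(-1,0,\ldots,0,1)$ survives. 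Consistency $\tilde{F}^{\EC}(U,U)=\tilde{F}$ then reduces this to $\tilde{F}_{Nj}-\tilde{F}_{0j}$, which exactly cancels the interior flux evaluations appearing inside the surface terms, leaving only the numerical surface fluxes. The analogous manipulation in the $\eta$-direction completes \eqref{ThmdU}.

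For the entropy identity \eqref{ThmdEnt}, the starting point is the discrete chain rule $\left(\Ent_t\right)_{ij}=\bm{V}_{ij}^T\left(\bm{U}_t\right)_{ij}$, valid because $\bm{v}=\partial\Ent/\partial\bm{u}$. I would contract \eqref{eq:ECDG} from the left by the nodal entropy variables $\bm{V}_{ij}^T$, multiply by $\omega_i\omega_j$, and sum. The volume term becomes $2\sum_{i,m}\omega_i\D_{im}\bm{V}_{ij}^T\tilde{F}^{\EC}(U_{ij},U_{mj})$, and the essential step is to split $\omega_i\D_{im}$ into its symmetric part $\tfrac12\mat B_{im}$ and an antisymmetric remainder $\tfrac12(\omega_i\D_{im}-\omega_m\D_{mi})$. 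The symmetric part, via consistency, produces the boundary bilinear terms $\bm{V}_{Nj}^T\tilde{F}_{Nj}-\bm{V}_{0j}^T\tilde{F}_{0j}$. For the antisymmetric part I would symmetrize the double sum and invoke the entropy conservation condition \eqref{eq:entCondition}, $\jump{\bm{v}}^T\tilde{F}^{\EC}=\jump{\tilde\Psi^f}$, to replace $(\bm{V}_{mj}-\bm{V}_{ij})^T\tilde{F}^{\EC}(U_{ij},U_{mj})$ by the potential jump $\tilde\Psi^f_{mj}-\tilde\Psi^f_{ij}$; a second antisymmetry relabeling together with the row-sum property $\sum_m\D_{im}=0$ and SBP then telescopes this to the boundary, yielding $\tilde\Psi^f_{0j}-\tilde\Psi^f_{Nj}$.

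Finally, I would assemble the pieces. Adding the volume contribution to the entropy-variable-weighted surface terms, the interior flux evaluations $\bm{V}_{Nj}^T\tilde{F}_{Nj}$ and $\bm{V}_{0j}^T\tilde{F}_{0j}$ cancel in precisely the same way as in the primary case, leaving the combination $\sum_\eqInd V^\eqInd_{Nj}\tilde{F}^{\EC,\eqInd}_{Nj}-\tilde\Psi^f_{Nj}$ at each boundary node, and similarly at node $0$. Repeating the argument in the $\eta$-direction and moving all boundary terms to the right-hand side gives \eqref{ThmdEnt}. I expect the main obstacle to be the careful bookkeeping of the antisymmetric volume term: getting the signs right through the two relabelings and correctly pairing the entropy conservation condition with the SBP boundary matrix is the delicate part, whereas the symmetric contribution and the cancellation against the surface terms become routine once the primary-conservation computation is in hand.
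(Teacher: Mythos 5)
Your proposal is correct and takes essentially the same route as the paper's proof in Appendix~\ref{sec:App A}: both contract the scheme with $\omega_i\omega_j$ (respectively with $\bm{V}_{ij}^T\omega_i\omega_j$), split $\mat Q = \mass\D$ via the SBP identity $2\mat Q = \left(\mat Q - \mat Q^T\right) + \mat B$, and use flux symmetry, consistency, the entropy condition \eqref{eq:entCondition}, and $\mat Q\One = \bm 0$ to telescope the volume contribution to the boundary, where the interior flux evaluations cancel against those in the surface terms. Your symmetric/antisymmetric decomposition of $\omega_i\D_{im}$ is exactly the paper's $2\mat Q = \mat Q - \mat Q^T + \mat B$ rewriting, and your sign bookkeeping (yielding $\tilde{\Psi}^f_{0j} - \tilde{\Psi}^f_{Nj}$ from the antisymmetric part) is accurate.
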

\begin{proof} The proof of \eqref{ThmdU} and \eqref{ThmdEnt} is given in Fisher et al. \cite{Fisher2013}, however, for completeness, we included the proof consistent to the current notation and formulations in Appendix \ref{sec:App A}.
\end{proof}

We first examine the volume contributions of the entropy conservative approximation because when contracted into entropy space the volume terms move to the interfaces \cite{Fisher2013,Gassner2017} in the form of the entropy flux potential, i.e. \eqref{eq:entPotential}. Note, that the proof in Appendix \ref{sec:App A} concerns the contribution of the volume integral in the DGSEM and only depends on the interior of an element. Therefore, the result of Lemma \ref{thm1} holds for conforming as well as non-conforming meshes.

Therefore, to obtain a primary and entropy conservative scheme on the entire domain we need to choose an appropriate numerical surface flux. In comparison to the volume flux, the surface flux depends on the interfaces of the elements. Here, we need to differ between elements with conforming and non-conforming interfaces. We will first describe how to determine such a scheme for conforming interfaces, but differing polynomial orders. Then, we extend these results to consider meshes with non-conforming interfaces (hanging nodes).

%%%%%%%%%%%%%%%%%%%%%%%%%%%%%%%%%%%%%%%%%%%%%%%%%%%%%%%%%%%%%%%%%%%%%%%%%%%%%%%%%%
\subsection{Conforming Interfaces}\label{sec:p}
%%%%%%%%%%%%%%%%%%%%%%%%%%%%%%%%%%%%%%%%%%%%%%%%%%%%%%%%%%%%%%%%%%%%%%%%%%%%%%%%%%

In this section we will show how to create a fully conservative scheme on a standard conforming mesh, i.e. the polynomial orders match and there are no hanging nodes. As shown in \eqref{ThmdU} and \eqref{ThmdEnt} the primary conservation and entropy growth is only determined by the numerical surface fluxes on the interface. Here we exploit that the tensor product basis decouples the approximation in the two spatial directions and many of the proofs only address the $\xi-$direction because the contribution in the $\eta-$direction is done in an analogous fashion. Furthermore, the contribution at the four interfaces of an element follow similar steps. As such, we elect to consider all terms related to a single shared interface of a left and right element.
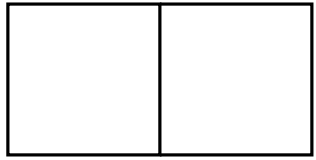
\begin{figure}[!ht]
\begin{center}
\begin{tikzpicture}[scale=0.5]
\draw[very thick] (-4,-2) -- (0,-2) -- (0,2) -- (-4,2) -- cycle;
\draw[very thick] (0,-2) -- (4,-2) -- (4,2) -- (0,2) ;
\end{tikzpicture}
\caption{Two neighboring elements with a single coinciding interface.}
\label{Confmesh}
\end{center}
\end{figure}
For a simple example we present a two element mesh in Fig. \ref{Confmesh} and consider the coupling through the single shared interface. Due to Lemma \ref{thm1} the terms referring to the shared interface are
\begin{align}
\ICon&=\sum_{j=0}^N\omega^R_j\tilde{{F}}^{\EC,\eqInd,R}_{0j}-\sum_{j=0}^N\omega_j^L\tilde{{F}}^{\EC,\eqInd,L}_{Nj},\label{ConCondbefore}\\
\IEnt&=\sum_{j=0}^N\omega^R_j\left(\sum\limits_{\eqInd=1}^M{V}_{0j}^{\eqInd,R}\tilde{{F}}^{\EC,\eqInd,R}_{0j}-\tilde{\Psi}^{R,f}_{0j}\right)-\sum_{j=0}^N\omega^L_j\left(\sum\limits_{\eqInd=1}^M{V}_{Nj}^{\eqInd,L}\tilde{{F}}^{\EC,\eqInd,L}_{Nj}-\tilde{\Psi}^{L,f}_{Nj}\right)\label{ICcondbefore},
\end{align}
where the subscript $L$ and $R$ refer to the left and right element, respectively. Here, $\ICon$ and $\IEnt$ approximate the integral of $\bm{u}_t$ and $\dEnt$ on a single interface. In order to derive a discretely conservative scheme, meaning that \eqref{BothConditionsMom} and \eqref{BothConditionsEntCon} hold, we need to derive numerical surface fluxes so that
\begin{equation}
\begin{split}
\ICon&=0,\\
\IEnt&=0,
\end{split}
\end{equation}
is satisfied.

Here, since we consider conforming interfaces, it is assumed that $\Delta y:=\Delta y_R=\Delta y_L$. Also, as we focus on a one dimensional interface (first component of $\tilde{{F}}^{\EC,\eqInd}, {V}^{\eqInd,R}$ and $\tilde{\Psi}^{f}$ are fixed), we set
\begin{equation}
\begin{split}
\tilde{\bm{F}}^{\EC,\eqInd,L}&:=(\tilde{{F}}_{N_L 0}^{\EC,\eqInd,L},\dots,\tilde{{F}}_{N_LN_L}^{\EC,\eqInd,L})^T,\\
\tilde{\bm{F}}^{\EC,\eqInd,R}&:=(\tilde{{F}}_{00}^{\EC,\eqInd,R},\dots,\tilde{{F}}_{0N_R}^{\EC,\eqInd,R})^T,
\end{split}
\end{equation}
and the same for $\tilde{\bm\Psi}^{L,f},\tilde{\bm\Psi}^{R,f},\bm V^{\eqInd,L},\bm V^{\eqInd,R}$ respectively.

Furthermore, we introduce the notation of the discrete inner product to approximate the $L_2$ inner product. Assume we have two continuous functions $a(x),b(x)$ with their discrete evaluation $\bm A,\bm B$ on $[-1,1]$, then
\begin{equation}\label{eq:disInProd}
\ipC{\bm A}{\bm B} := \bm A^T\mass \bm B\approx \int_{-1}^1a(x)b(x)~dx=:\ip{a}{L_2}{b}.
\end{equation}
Based on the inner product notation we can rewrite \eqref{ConCondbefore} and \eqref{ICcondbefore} by
\begin{align}
\ICon& = \ipR{\OneR}{\tilde{ \bm{F}}^{\EC,\eqInd,R}}-\ipL{\OneL}{\tilde{\bm{F}}^{\EC,\eqInd,L}},\label{ConCond}\\
\IEnt& = \sum\limits_{\eqInd=1}^M\ipR{ \bm{V}^{\eqInd,R}}{\tilde{ \bm{F}}^{\EC,\eqInd,R}}-\ipR{\OneR}{\tilde{\bm{\Psi}}^{R,f}}-\sum\limits_{\eqInd=1}^M\ipL{ \bm{V}^{\eqInd,L}}{\tilde{\bm{F}}^{\EC,\eqInd,L}}+\ipL{\OneL}{\tilde{\bm{\Psi}}^{L,f}}.\label{ICcond},
\end{align}
where $\OneL,\OneR$ are vectors of ones with size ${N_L+1}$ and $N_R+1$, respectively. The choice of the numerical flux depends on the nodal distribution in each element. Here, we differ between conforming and non-conforming nodal distributions, which is done in the next section.

%%%%%%%%%%%%%%%%%%%%%%%%%%%%%%%%%%%%%%%%%%%%%%%%%%%%%%%%%%%%%%%%%%%%%%%%%%%%%%%%%%
\subsubsection{Conforming Nodal Distribution}
%%%%%%%%%%%%%%%%%%%%%%%%%%%%%%%%%%%%%%%%%%%%%%%%%%%%%%%%%%%%%%%%%%%%%%%%%%%%%%%%%%

We first provide a brief overview on the entropy conservative properties of the conforming DGSEM \eqref{eq:ECDG}. This is straightforward in the conforming case and we use this discussion to introduce notation which is necessary for the non-conforming proofs presented later. That is, the nodal distributions in each element are identical and there are no hanging nodes in the mesh. For a conforming approximation it is possible to have a point-to-point transfer of solution information at interfaces because the mass matrix, polynomial order and numerical flux ``match''
\begin{equation}
\label{ConfHelp}
\begin{split}
\mass&:=\massR=\massL,\\
N&:=N_R=N_L,\\
\tilde{\bm{F}}^{\EC,\eqInd}&:=\tilde{\bm{F}}^{\EC,\eqInd,R}=\tilde{\bm{F}}^{\EC,\eqInd,L}.
\end{split}
\end{equation}
Primary and entropy conservation can be achieved by choosing an entropy conservative numerical flux function as shown by Fisher et al. \cite{Fisher2013}. We include the proof for completeness and recast it in our notation in Lemma \ref{Thm:Conf}.\\

\begin{lem}\label{Thm:Conf}
Assume we have an entropy conservative numerical flux function, $\tilde{\bm{F}}^{\EC}$, that satisfies \eqref{eq:entCondition}, then the split form DGSEM is primary and entropy conservative for the DGSEM \eqref{eq:splitDG} by setting the numerical volume and surface fluxes to be $\tilde{\bm{F}}^{\#}=\tilde{\bm{F}}^{*}:=\tilde{\bm{F}}^{\EC}$.
\end{lem}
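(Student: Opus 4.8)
The plan is to use Lemma~\ref{thm1} to reduce the global statements \eqref{BothConditionsMom} and \eqref{BothConditionsEntCon} to the single-interface balances $\ICon$ and $\IEnt$ already isolated in \eqref{ConCond} and \eqref{ICcond}, and then to show each of these vanishes under the conforming matching hypotheses \eqref{ConfHelp}. Since the numerical volume flux is taken to be $\tilde{\bm{F}}^{\EC}$, the hypothesis of Lemma~\ref{thm1} is satisfied, so the per-element growth of the conserved quantities and of the entropy is given by \eqref{ThmdU} and \eqref{ThmdEnt}; that is, the volume contributions have already been absorbed into purely surface terms involving the entropy flux potential. Summing over all elements, each interior interface is shared by exactly two neighbours whose contributions collect precisely into $\ICon$ and $\IEnt$. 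It therefore suffices to prove $\ICon=0$ and $\IEnt=0$ on a single shared interface.

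For primary conservation I would invoke \eqref{ConfHelp}: the weights agree, $\omega_j^L=\omega_j^R=:\omega_j$, and the single consistent numerical surface flux is shared by both sides, so $\tilde{{F}}^{\EC,\eqInd,L}_{Nj}=\tilde{{F}}^{\EC,\eqInd,R}_{0j}=:\tilde{{F}}^{\EC,\eqInd}_{j}$ at every interface node $j$. Writing out the inner products in \eqref{ConCond} with the diagonal mass matrix turns $\ICon$ into the node-by-node difference $\sum_{j}\omega_j\tilde{{F}}^{\EC,\eqInd,R}_{0j}-\sum_{j}\omega_j\tilde{{F}}^{\EC,\eqInd,L}_{Nj}$, which cancels term by term and yields $\ICon=0$ for each component $\eqInd=1,\dots,M$.

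For entropy conservation I would expand the inner products in \eqref{ICcond} using $\ipC{\bm A}{\bm B}=\bm A^T\mass\bm B$ with $\mass$ diagonal, collapsing $\IEnt$ into a single weighted sum over the interface nodes. Using again that the fluxes, weights, and nodes match, the summand at node $j$ reduces to $\sum_{\eqInd=1}^M\big({V}^{\eqInd,R}_{0j}-{V}^{\eqInd,L}_{Nj}\big)\tilde{{F}}^{\EC,\eqInd}_{j}-\big(\tilde{\Psi}^{R,f}_{0j}-\tilde{\Psi}^{L,f}_{Nj}\big)$, which I recognize as the discrete jump pairing $\jump{\bm v}^T\tilde{\bm f}^{\EC}-\jump{\tilde\Psi^f}$ evaluated at that node. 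The step I expect to require the most care is the metric scaling: the entropy conservation condition \eqref{eq:entCondition} is stated for the \emph{physical} flux and potential, whereas $\tilde{\bm{F}}^{\EC}$ and $\tilde\Psi^f$ carry the contravariant factor $\Delta y/2$. Because the interface is conforming, $\Delta y_L=\Delta y_R$, so this factor is common to both sides and \eqref{eq:entCondition} transfers verbatim to the tilde quantities, forcing each node to contribute zero. Hence $\IEnt=0$, and summing the vanishing interface balances over all elements would recover \eqref{BothConditionsMom} and \eqref{BothConditionsEntCon}, which would complete the proof.
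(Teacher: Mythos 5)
Your proposal is correct and follows essentially the same route as the paper: reduce to the single-interface balances $\ICon$ and $\IEnt$ via Lemma~\ref{thm1}, cancel the primary terms node-by-node using \eqref{ConfHelp}, and collapse $\IEnt$ to the nodewise jump pairing annihilated by \eqref{eq:entCondition} (the paper merely packages this last step with the Hadamard-product identity $\ipC{\bm A}{\bm B}=\ipC{\One}{\bm A\circ\bm B}$ rather than expanding the weighted sums directly). Your extra care about the contravariant scaling $\Delta y/2$ is sound and is exactly what the paper's standing assumption $\Delta y:=\Delta y_L=\Delta y_R$, stated just before \eqref{ConCond}, quietly handles.
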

\begin{proof}
Primary conservation can be shown easily by inserting \eqref{ConfHelp} in \eqref{ConCond}:
\begin{equation}
\ICon:=\ipC{\One}{\tilde{\bm{F}}^{\EC,\eqInd}}-\ipC{\One}{\tilde{\bm{F}}^{\EC,\eqInd}}=0.
\end{equation}
For entropy conservation we analyze \eqref{ICcond}
\begin{equation}
\label{ICcondpause}
\IEnt =  \sum\limits_{\eqInd=1}^M\ipC{\bm{V}^{\eqInd,R}}{\tilde{\bm{F}^{\EC,\eqInd}}}- \sum\limits_{\eqInd=1}^M\ipC{\bm{V}^{\eqInd,L}}{\tilde{\bm{F}}^{\EC,\eqInd}}-\ipC{\One}{\tilde{\bm{\Psi}}^{f,R}}+\ipC{\One}{\tilde{\bm{\Psi}}^{f,L}}.
\end{equation}
For the discrete inner product it holds
\begin{equation}
\ipC{\bm A}{\bm B}=\ipC{\One}{ \bm A\circ \bm B},
\end{equation}
where $\circ$ denotes the Hadamard product for matrices. Then \eqref{ICcondpause} is rearranged to become
\begin{align}
\IEnt&= \ipC{\One}{\sum\limits_{\eqInd=1}^M\bm{V}^{\eqInd,R}\circ\tilde{\bm{F}}^{\EC,\eqInd}}-\ipC{\One}{\sum\limits_{\eqInd=1}^M\bm{V}^{\eqInd,L}\circ\tilde{\bm{{F}}^{\EC,\eqInd}}}
-\ipC{\One}{\tilde{\bm{ \Psi}}^{f,R}}+\ipC{\One}{\tilde{\bm{\Psi}}^{f,L}}\notag\\
&=\ipC{\One}{\sum\limits_{\eqInd=1}^M(\bm{V}^{\eqInd,R}-\bm{V}^{\eqInd,L})\circ\tilde{\bm{F}}^{\EC,\eqInd}-(\tilde{\bm\Psi}^{f,R}-\tilde{\bm\Psi}^{f,L})}\\
&=\ipC{\One}{\bm\Phi}\notag,
\end{align}
where $\bm\Phi:= \sum\limits_{\eqInd=1}^M(\bm{V}^{\eqInd,R}-\bm{V}^{\eqInd,L})\circ\tilde{\bm{F}}^{\EC,\eqInd}-(\tilde{\bm\Psi}^{f,R}-\tilde{\bm\Psi}^{f,L})$. By analyzing a single component of $\bm\Phi$ we find
\begin{equation}
\Phi_i= \sum\limits_{\eqInd=1}^M\left({V}^{\eqInd,R}_{i0}- {V}^{\eqInd,L}_{iN}\right)  \tilde{{F}}^{\EC,\eqInd}(\bm{U}_{iN}^L,\bm{U}_{i0}^R)-( \tilde\Psi_{i0}^{f,R}- \tilde\Psi_{iN}^{f,L})=0,
\end{equation}
due to \eqref{eq:entCondition}. So
\begin{equation}
\IEnt=\ipC{\One}{\bm 0}=0,
\end{equation}
which leads to an entropy conservative nodal DG scheme.
\end{proof}
How to modify the entropy conservative numerical flux with dissipation to ensure that the scheme is entropy stable is described later in Sec. \ref{sec:Dissipation}. For now, we address the issue of $h/p$ refinement where non-conforming meshes may contain differing nodal distributions or hanging nodes. To do so, we consider the entropy conservative fluxes in a modified way. Namely, the projection procedure of the standard mortar method is augmented in the next sections to guarantee the entropic properties of the numerical approximation.

%\section{Discrete Analysis on Two Dimensional Non-Conforming Meshes}\label{sec: NonConfMesh}
%In this chapter we focus on the derivations of an entropy conservative non-conforming scheme. Here, neither the nodal distribution nor the width or height of the element need to coincide. We have divided this chapter into two parts. First, we consider the case where the interfaces coincide, but not the nodal distribution ($p$-refinement). Based on these results we consider a more general case where not even the interfaces coincide ($h$-refinement).
%

%%%%%%%%%%%%%%%%%%%%%%%%%%%%%%%%%%%%%%%%%%%%%%%%%%%%%%%%%%%%%%%%%%%%%%%%%%%%%%%%%%
\subsubsection{Non-Conforming Nodal Distribution}\label{subsec: p}
%%%%%%%%%%%%%%%%%%%%%%%%%%%%%%%%%%%%%%%%%%%%%%%%%%%%%%%%%%%%%%%%%%%%%%%%%%%%%%%%%%

In this section we focus on a discretization, where the nodes do not coincide ($p$-refinement), see Fig. \ref{fig:NonConfMesh}(a). As such, we introduce projection operators
\begin{equation}
\mat{P}_{L2R}\in\mathbb{R}^{(N_R+1)\times (N_L+1)},\quad \mat{P}_{R2L}\in\mathbb{R}^{(N_L+1)\times (N_R+1)}.
\end{equation}
In particular, the solution on either element is always moved to its neighbor where the entropy conservative numerical flux is computed. In a sense, this means we ``hide'' the mortar used to cement the two elements together in the non-conforming approximation. This presentation is motivated to simplify the discussion. The mortars are a useful analytical tool to describe the idea of a non-conforming DG method, but in a practical implementation they can be removed with a careful construction of the projection operators.

Here $\mat P_{L2R}$ denotes the projection from the left element to the right element, whereas $\mat P_{R2L}$ denotes the projection from the right element to the left. In the approximation we have two solution polynomials $\bm{\mathfrak{p}}_L$ and $\bm{\mathfrak{p}}_R$ evaluated at the corresponding interfaces of each element. The numerical approximation is primary and entropy conservative provided both \eqref{ConCond} and \eqref{ICcond} are zero. However, the subtractions involve two discrete inner products with differing polynomial order between the left and right elements. Therefore, we require projection operators that move information from the left node distribution to the right and vice versa. As such, in discrete inner product notation, the projections must satisfy \cite{Mattsson2010b}
\begin{equation}\label{ProjMot}
\ipL{\bm{\mathfrak{p}}_L}{\mat{P}_{R2L}\bm{\mathfrak{p}}_R}=\ipR{\mat{P}_{L2R}\bm{\mathfrak{p}}_L}{\bm{\mathfrak{p}}_R}\;
\Leftrightarrow\;
\bm{\mathfrak{p}}_L^T\massL\mat{P}_{R2L}\bm{\mathfrak{p}}_R=\bm{\mathfrak{p}}_L^T\mat{P}_{L2R}^T\massR\bm{\mathfrak{p}}_R.
\end{equation}
As the polynomials in \eqref{ProjMot} are arbitrary, we set the projection operators to be \textit{$\mass$-compatible}, meaning
\begin{equation}\label{7}
\begin{split}
\mat{P}_{R2L}^T\massL&=\massR\mat{P}_{L2R},
\end{split}
\end{equation}
which is the same constraint considered in \cite{Carpenter2016,Friedrich2016,Kozdon2016,Mattsson2010b,Parsani2016}. Non-conforming methods with DG operators have been derived by Kopriva \cite{Kopriva2002} on LGL nodes, which imply a diagonal SBP norm. The construction of the projection operators is motivated by a discrete $L_2$ projection over Lagrange polynomials and can be found in Appendix \ref{sec:App B}.

The conditions for primary conservation \eqref{ConCond} and entropy conservation \eqref{ICcond} can be directly adapted from the conforming case. Before proving total conservation, we first introduce the operator $\EE$ to simplify the upcoming proof of Theorem \ref{thm: p} and to make it more compact. The operator $\EE$ extracts the diagonal of a matrix:
\begin{equation}\label{eq:EE}
\EE\begin{pmatrix} a_{11} & \hdots & \hdots & a_{1n} \\ \vdots & \ddots &  & \vdots \\ \vdots &  & \ddots & \vdots \\ a_{n1} & \hdots & \hdots & a_{nn} \end{pmatrix}= \begin{pmatrix} a_{11}\\ a_{22} \\ \vdots \\ a_{n-1,n-1} \\ a_{nn} \end{pmatrix},
\end{equation}
and has the following property. \\

\begin{lem}\label{Lemma}
 Given a vector $\bm a\in\mathbb{R}^{N_L+1}$, a diagonal matrix $\mat A=\diag(\bm a)\in\mathbb{R}^{(N_L+1)\times(N_L+1)}$ and a dense rectangular matrix $\mat{B}\in\mathbb{R}^{(N_L+1)\times(N_R+1)}$, then
\begin{equation}
\ipL{\bm a}{\EE(\mat P_{R2L}\mat B^T)}=\ipR{\OneR}{\EE(\mat P_{L2R}\mat A\mat B)}.
\end{equation}
\end{lem}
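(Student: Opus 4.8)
The plan is to prove the identity by expanding both sides into scalar double sums over the two index ranges and matching them term by term using the $\mass$-compatibility relation \eqref{7}. The whole statement is essentially a bookkeeping identity, so the strategy is to reduce each inner product to a single double sum and read off the pointwise equality that makes the two sums agree.

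First I would expand the left-hand side. Since $\EE$ extracts the diagonal and $\massL$ is diagonal, the $i$-th entry of $\EE(\mat{P}_{R2L}\mat{B}^T)$ is $\sum_{k}(\mat{P}_{R2L})_{ik}\mat{B}_{ik}$, so that
\begin{equation*}
\ipL{\bm a}{\EE(\mat{P}_{R2L}\mat{B}^T)}=\sum_{i=0}^{N_L}\sum_{k=0}^{N_R} a_i\,(\massL)_{ii}\,(\mat{P}_{R2L})_{ik}\,\mat{B}_{ik}.
\end{equation*}
Next I would expand the right-hand side. Because $\mat{A}=\diag(\bm a)$ is diagonal, the $k$-th diagonal entry of $\mat{P}_{L2R}\mat{A}\mat{B}$ collapses to $\sum_{i}(\mat{P}_{L2R})_{ki}\,a_i\,\mat{B}_{ik}$, and since $\OneR$ contracts against the diagonal of $\massR$, I obtain
\begin{equation*}
\ipR{\OneR}{\EE(\mat{P}_{L2R}\mat{A}\mat{B})}=\sum_{i=0}^{N_L}\sum_{k=0}^{N_R}(\massR)_{kk}\,(\mat{P}_{L2R})_{ki}\,a_i\,\mat{B}_{ik}.
\end{equation*}

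The two expressions now agree term by term provided $(\massL)_{ii}(\mat{P}_{R2L})_{ik}=(\massR)_{kk}(\mat{P}_{L2R})_{ki}$ for every admissible pair $(i,k)$. This is precisely the $(k,i)$ component of the $\mass$-compatibility condition $\mat{P}_{R2L}^T\massL=\massR\mat{P}_{L2R}$ in \eqref{7}: using diagonality of $\massL$ and $\massR$, the $(k,i)$ entry of $\mat{P}_{R2L}^T\massL$ is $(\mat{P}_{R2L})_{ik}(\massL)_{ii}$ while that of $\massR\mat{P}_{L2R}$ is $(\massR)_{kk}(\mat{P}_{L2R})_{ki}$. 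Substituting this equality into the left-hand sum turns it into the right-hand sum, which closes the argument.

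I do not expect a genuine obstacle here; the computation is routine index manipulation. The only real content is recognizing that the diagonal-extraction operator $\EE$, combined with the diagonality of $\massL$, $\massR$, and $\mat{A}$, reduces both inner products to single double sums whose summands are matched exactly by \eqref{7}. Consequently, the one thing to be careful about is keeping the index ordering of $\mat{P}_{L2R}$ and $\mat{P}_{R2L}$ consistent with their respective domains and codomains, so that the compatibility relation is applied to the correct entry.
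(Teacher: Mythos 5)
Your proof is correct. Both expansions check out: the $i$-th entry of $\EE(\mat P_{R2L}\mat B^T)$ is indeed $\sum_{k}(\mat P_{R2L})_{ik}\mat B_{ik}$, the $k$-th entry of $\EE(\mat P_{L2R}\mat A\mat B)$ is $\sum_{i}(\mat P_{L2R})_{ki}\,a_i\,\mat B_{ik}$, and the entrywise identity $(\massL)_{ii}(\mat P_{R2L})_{ik}=(\massR)_{kk}(\mat P_{L2R})_{ki}$ is precisely the $(k,i)$ component of \eqref{7}, so the two double sums match term by term. Your route differs from the paper's in packaging rather than substance: the paper argues at the matrix level, moving the diagonal factors $\mat A$ and $\massL$ inside the extraction operator, invoking \eqref{7} once as the matrix identity $\massL\mat P_{R2L}=\mat P_{L2R}^T\massR$, and then closing with two structural facts about $\EE$ --- the trace-cyclicity-type swap $\OneLT\EE(\tilde{\mat A}\tilde{\mat B})=\OneRT\EE(\tilde{\mat B}\tilde{\mat A})$ (itself proved by exactly the double-sum exchange you perform) and $\EE(\mat W)=\EE(\mat W^T)$ for square $\mat W$. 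Your fully componentwise version needs neither auxiliary fact and is shorter and more self-contained; what the paper's formulation buys is a small reusable calculus for $\EE$ (diagonal matrices commute into the extraction, cyclic swap under contraction with vectors of ones) that is applied almost verbatim in the proof of Theorem~\ref{thm: p}, e.g.\ when the entropy variables $\mat{V}^{\eqInd,R}$ are shifted inside $\EE$, and again, with the element-size weights $\Delta_{L_i}$, $\Delta_R$ and condition \eqref{hproj2} replacing \eqref{7}, in Corollary~\ref{cor}. Your entrywise argument extends to those settings just as easily, so nothing is lost --- only the intermediate lemmas that the paper later leans on would have to be re-derived inline.
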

\begin{proof}
\begin{equation}
\begin{split}
\ipL{\bm a}{\EE(\mat P_{R2L}\mat B^T)}&=\OneLT\mat A \massL \EE(  \mat P_{R2L}\mat B^T)=\OneLT \EE(\mat A \underbrace{\massL  \mat P_{R2L}}_{
\stackrel{\eqref{7}}{=}\mat P_{L2R}^T  \massR}\mat B^T),\\
&=\OneLT \EE(\underbrace{\mat A \mat P_{L2R}^T  }_{=:\tilde{\mat  A}}\underbrace{\massR\mat B^T}_{=:\tilde{\mat  B}})=\OneLT \EE(\tilde{\mat  A}\tilde{\mat  B}),
\end{split}
\end{equation}
since $\mat A$ and the norm matrix $\massL$ are diagonal matrices they are free to move inside the extraction operator \eqref{eq:EE} and $\tilde{\mat A}\in\mathbb{R}^{(N_L+1)\times(N_R+1)}, \tilde{\mat B}\in\mathbb{R}^{(N_R+1)\times(N_L+1)}$. Note, that
\begin{equation*}
\OneLT \EE(\tilde{\mat  A}\mat{ \tilde B})=\sum_{i=0}^{N_L}1\sum_{j=0}^{N_R} \tilde A_{ij} \tilde B_{ji}=\sum_{j=0}^{N_R}1\sum_{i=0}^{N_L} \tilde B_{ji} \tilde A_{ij}=\OneRT\EE(\tilde{\mat  B}\mat{ \tilde A}).
\end{equation*}
By replacing $\tilde{\mat  A}, \tilde{\mat  B}$ we get
\begin{equation}
\begin{split}
\ipL{\bm a}{\EE(\mat P_{R2L}\mat B^T)}&=\OneRT\EE(\massR\mat B^T\mat A \mat P_{L2R}^T)=\OneRT\massR\EE(\mat P_{L2R}\mat A\mat B),\\
&=\ipR{\OneR}{\EE(\mat P_{L2R}\mat A\mat B)},
\end{split}
\end{equation}
because $\EE(\mat W)=\EE(\mat W^T)$ for any square matrices $\mat W$.
\end{proof}

Furthermore, we introduce the following matrices
\begin{equation}\label{def}
\begin{split}
[\FstarLRK]_{ij}&=\tilde{{F}}^{\EC,\eqInd}(\bm{U}^L_{iN_L},\bm{U}^R_{j0}),\\
[\tilde{\mat\Psi}^{f,L}]_{ij}&=\tilde\Psi^f(\bm{U}^L_{iN_L}), \\
[\tilde{\mat\Psi}^{f,R}]_{ij}&=\tilde\Psi^f(\bm{U}^R_{j0}),
\end{split}
\end{equation}
for $i=0,\dots,N_L$, $j=0,\dots,N_R$, where $N_L$ and $N_R$ denote the number of nodes in one dimension in left and right element and $\eqInd=1,\dots,M$. Here, $\tilde{{F}}^{\EC,\eqInd}$ denotes a flux satisfying \eqref{eq:entCondition}. We note that the matrices containing the entropy flux potential are constant along rows or columns respectively and that for the non-conforming case $N_L\neq N_R$, so all matrices in \eqref{def} are rectangular. \\
With the operator $\EE$, Lemma \ref{Lemma} and \eqref{def}, it is possible to construct an entropy conservative scheme for non-conforming, non-linear problems.

\begin{thm} \label{thm: p}
Assume we have an entropy conservative numerical flux $\tilde{\bm{F}}^{\EC}$ from a conforming discretization satisfying the condition \eqref{eq:entCondition}. The fluxes
\begin{align}
 \tilde{{F}}^{\EC,\eqInd,L}_{i}&:=\sum_{j=0}^{N_R}\left[\mat{P}_{R2L}\right]_{ij}\left[\left(\FstarLRK\right)^T\right]_{ji},\quad i = 0,\ldots,N_L,\\
 \tilde{{F}}^{\EC,\eqInd,R}_{j}&:=\sum_{i=0}^{N_L}\left[\mat{P}_{L2R}\right]_{ji}[\FstarLRK]_{ij},\quad j = 0,\ldots,N_R,
\end{align}
or in a more compact matrix-vector notation
\begin{align}
 \tilde{\bm{F}}^{\EC,\eqInd,L}&:=\EE\left(\mat{P}_{R2L}\FstarLRKT\right),\label{8.1}\\
\tilde {\bm{F}}^{\EC,\eqInd,R}&:=\EE\left(\mat{P}_{L2R}\FstarLRK\right),\label{8.2}
\end{align}
are primary and entropy conservative for non-conforming nodal distributions.
\end{thm}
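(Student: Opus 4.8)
The plan is to verify the two discrete interface identities $\ICon=0$ and $\IEnt=0$ starting from their expressions \eqref{ConCond} and \eqref{ICcond}, using Lemma \ref{Lemma} to reconcile the mismatched left- and right-element inner products and then invoking the pointwise entropy conservation condition \eqref{eq:entCondition}. The two halves of the statement (primary and entropy) are handled separately, with the flux definitions \eqref{8.1} and \eqref{8.2} substituted into the relevant inner products.

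Primary conservation is the short step. Inserting \eqref{8.1} into $\ipL{\OneL}{\tilde{\bm{F}}^{\EC,\eqInd,L}}$ and applying Lemma \ref{Lemma} with $\bm{a}=\OneL$, so that $\mat{A}=\mat{I}$, and $\mat{B}=\FstarLRK$, gives $\ipL{\OneL}{\tilde{\bm{F}}^{\EC,\eqInd,L}}=\ipR{\OneR}{\EE(\mat{P}_{L2R}\FstarLRK)}=\ipR{\OneR}{\tilde{\bm{F}}^{\EC,\eqInd,R}}$ by \eqref{8.2}. The two terms of \eqref{ConCond} therefore cancel and $\ICon=0$; only the $\mass$-compatibility \eqref{7} built into Lemma \ref{Lemma} is used here, and no property of the flux is needed.

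For entropy conservation I would split \eqref{ICcond} into its flux/entropy-variable part and its entropy-flux-potential part. Applying Lemma \ref{Lemma} to the left flux term, now with $\mat{A}=\VkL=\diag(\evL)$ and $\mat{B}=\FstarLRK$, and putting the right flux term in the same $R$-inner-product form by sliding the diagonal factors $\VkR$ and $\massR$ through $\EE$, the flux part becomes $\sum_{\eqInd}\ipR{\OneR}{\EE\left((\VkR\mat{P}_{L2R}-\mat{P}_{L2R}\VkL)\FstarLRK\right)}$. Since $(\VkR\mat{P}_{L2R}-\mat{P}_{L2R}\VkL)_{ji}=[\mat{P}_{L2R}]_{ji}(V^{\eqInd,R}_j-V^{\eqInd,L}_i)$, summing over $\eqInd$ and invoking the pointwise condition \eqref{eq:entCondition} at the node pair $(\bm{U}^L_{iN_L},\bm{U}^R_{j0})$ trades the entropy-variable jump for the entropy-potential jump and collapses the flux part to $\ipR{\OneR}{\EE(\mat{P}_{L2R}(\tilde{\mat{\Psi}}^{f,R}-\tilde{\mat{\Psi}}^{f,L}))}$ with the potential matrices of \eqref{def}.

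It then remains to show this equals the potential part $\ipR{\OneR}{\tilde{\bm{\Psi}}^{R,f}}-\ipL{\OneL}{\tilde{\bm{\Psi}}^{L,f}}$ of \eqref{ICcond}, and this is where I expect the real work. Using that $[\tilde{\mat{\Psi}}^{f,R}]_{ij}$ depends only on $j$ and $[\tilde{\mat{\Psi}}^{f,L}]_{ij}$ only on $i$ (by \eqref{def}), the two pieces $\EE(\mat{P}_{L2R}\tilde{\mat{\Psi}}^{f,R})$ and $\EE(\mat{P}_{L2R}\tilde{\mat{\Psi}}^{f,L})$ reduce to $\mat{P}_{L2R}\OneL$ and $\mat{P}_{L2R}$ acting on the potential vectors $\tilde{\bm{\Psi}}^{R,f}$ and $\tilde{\bm{\Psi}}^{L,f}$. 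Matching these against the two target terms forces two properties of the projections beyond $\mass$-compatibility, namely that they reproduce constants: $\mat{P}_{L2R}\OneL=\OneR$ and $\mat{P}_{R2L}\OneR=\OneL$. The first closes the $R$-potential term immediately; the second, combined with \eqref{7} in the form $\massR\mat{P}_{L2R}=\mat{P}_{R2L}^T\massL$, yields $\OneRT\massR\mat{P}_{L2R}=\OneLT\massL$ and closes the $L$-potential term, so that $\IEnt=0$. Since the whole entropy balance hinges on these two \emph{constant-reproduction} identities, I would state them explicitly at the outset or cite the discrete $L_2$-projection construction of Appendix \ref{sec:App B}, where they hold because constants belong to both polynomial trace spaces.
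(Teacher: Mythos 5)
Your proposal is correct and follows essentially the same route as the paper's proof: identical use of Lemma \ref{Lemma} with $\bm a=\OneL$ for primary conservation, and for entropy the same three ingredients ($\mass$-compatibility \eqref{7}, constant exactness $\mat{P}_{L2R}\OneL=\OneR$, $\mat{P}_{R2L}\OneR=\OneL$, and the pointwise condition \eqref{eq:entCondition}), merely reordered — you invoke \eqref{eq:entCondition} on the flux block first and then match the potential terms, whereas the paper rewrites the potentials via $\EE$ and Lemma \ref{Lemma}, collects everything into the single matrix $\tilde{\mat S}$, and applies \eqref{eq:entCondition} once at the end. The constant-reproduction identities you rightly insist on stating explicitly are exactly the ones the paper invokes in its analysis of term $(III)$, justified by the Appendix \ref{sec:App B} construction.
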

\begin{proof} First, we prove primary conservation by including \eqref{8.1} and \eqref{8.2} in \eqref{ConCond}
\begin{equation}\label{eq:IUT}
\begin{split}
\ICon:=&\ipR{\OneR}{\EE\left(  \mat{P}_{L2R}\FstarLRK\right)}-\ipL{\OneL}{\EE\left(  \mat{P}_{R2L}\FstarLRKT\right)}.\\
\end{split}
\end{equation}
We apply the result of Lemma \ref{Lemma} to the last term of \eqref{eq:IUT} with $\bm a = \OneL$ and $\mat B = \FstarLRK$ to get the conservation for the primary variables
\begin{equation}
\ICon=\ipR{\OneR}{\EE\left(  \mat{P}_{L2R}\FstarLRK\right)}-\ipR{\OneR}{\EE\left(  \mat{P}_{L2R}\FstarLRK\right)}=0.
\end{equation}
Next, we show that the discretization is entropy conservative. To do so, we substitue the fluxes \eqref{8.1} and \eqref{8.2} in  \eqref{ICcond}.
\begin{equation}\label{ThreeParts}
\begin{aligned}
\IEnt:=&\sum\limits_{\eqInd=1}^M \ipR{\evR}{\EE\left(  \mat{P}_{L2R}\FstarLRK\right)}-\ipR{\OneR}{\tilde{\bm{\Psi}}^{f,R}}\\
&-\sum\limits_{\eqInd=1}^M\ipL{\evL}{\EE\left(  \mat{P}_{R2L}\FstarLRKT\right)} + \ipL{\OneL}{\tilde{\bm{\Psi}}^{f,L}}.
\end{aligned}
\end{equation}
We divide \eqref{ThreeParts} into three pieces to simplify the analysis.
\begin{equation}\label{final}
\begin{aligned}
\IEnt=&\underbrace{\sum_{\eqInd=1}^M\ipR{\evR}{\EE\left(  \mat{P}_{L2R}\FstarLRK\right)}}_{=(I)}-\underbrace{\sum\limits_{\eqInd=1}^M\ipL{\evL}{\EE\left(  \mat{P}_{R2L}\FstarLRKT\right)}}_{=(II)}\\
&-\left(\underbrace{\ipR{\OneR}{\tilde{\bm{\Psi}}^{f,R}} - \ipL{\OneL}{\tilde{\bm{\Psi}}^{f,L}}}_{=(III)}\right).
\end{aligned}
\end{equation}
For $(I)$ we see that
\begin{align}
(I)= \sum\limits_{\eqInd=1}^M\ipR{\evR}{\EE\left(\mat{P}_{L2R}\FstarLRK\right)}= \sum\limits_{\eqInd=1}^M\ipR{\OneR}{\bm{V}^{\eqInd,R}\circ\EE\left(\mat{P}_{L2R}\FstarLRK\right)}.
\end{align}
By introducing $\mat{V}^{\eqInd,R}:=\mathrm{diag}(\bm V^{\eqInd,R})$ we can shift the entropy variables inside the $\EE$ operator and obtain
\begin{align}
(I) = \sum\limits_{\eqInd=1}^M\ipR{\OneR}{\EE\left(\mat{V}^{\eqInd,R}\mat{P}_{L2R}\tilde{\mat{F}}^{\EC,\eqInd}_{L,R}\right)}= \ipR{\OneR}{\EE\left(\mat{P}_{L2R}\left(\sum\limits_{\eqInd=1}^M\mat{V}^{\eqInd,R}\FstarLRK\right)\right)},
\end{align}
because $\EE(\mat A\mat B)=\EE(\mat B\mat A)$ for square matrices $\mat A,\mat B$.

Considering the second term (II) of \eqref{final}
\begin{align}
(II)=& \sum\limits_{\eqInd=1}^M\ipL{\evL}{\EE\left(  \mat{P}_{R2L}\FstarLRKT\right)},
\end{align}
and applying Lemma \ref{Lemma} with $\bm a = \evL, \mat A = \mat{V}^{\eqInd,L}$, and $\mat B = \FstarLRK$ gives
\begin{align}
(II)=\sum\limits_{\eqInd=1}^M\ipR{\OneR}{\EE\left(\mat{P}_{L2R}\mat{V}^{\eqInd,L}\FstarLRK  \right)}=\ipR{\OneR}{\EE\left(\mat{P}_{L2R}\left(\sum\limits_{\eqInd=1}^M\mat{V}^{\eqInd,L}\FstarLRK\right)  \right)}.
\end{align}

Last, we analyze term $(III)$,
\begin{equation}\label{(III)}
(III)=\ipR{\OneR}{\tilde{\bm{\Psi}}^{f,R}}-\ipL{\OneL}{\tilde{\bm{\Psi}}^{f,L}}.
\end{equation}
For this analysis we rewrite $(III)$ in terms of the matrices $\tilde{\mat{\Psi}}^{f,L}, \tilde{\mat{\Psi}}^{f,R}$. Note, that each column of $\tilde{\mat{\Psi}}^{f,R}$ and each row of $\tilde{\mat{\Psi}}^{f,L}$ remain constant. The projection operators are exact for a constant state, i.e. $\mat{P}_{R2L}\OneR=\OneL$ and $\mat{P}_{L2R}\OneL=\OneR$. Hence, we define
\begin{align}
\tilde{\bm{\Psi}}^{f,R}&=\EE\left(  \mat{P}_{L2R}\tilde{\mat{\Psi}}^{f,R}\right),\\
\tilde{\bm{\Psi}}^{f,L}&=\EE\left(  \mat{P}_{R2L}\left(\tilde{\mat{\Psi}}^{f,L}\right)^{\!T}\right).
\end{align}
Substituting the above definitions in \eqref{(III)} we arrive at
\begin{align}
(III)=\ipR{\OneR}{\EE\left(  \mat{P}_{L2R}\tilde{\mat{\Psi}}^{f,R}\right)}-\ipL{\OneL}{\EE\left(  \mat{P}_{R2L}\left(\tilde{\mat{\Psi}}^{f,L}\right)^{\!T}\right)}.
\end{align}
Again applying Lemma \ref{Lemma} (where $\bm a=\OneL, \mat B = \tilde{\mat{\Psi}}^{f,L}$) yields
\begin{align}
(III)=&\ipR{\OneR}{\EE\left(\mat{P}_{L2R}\tilde{\mat{\Psi}}^{f,R}  \right)}-\ipR{\OneR}{\EE\left(\mat{P}_{L2R}\tilde{\mat{\Psi}}^{f,L}  \right)},\\
=&\ipR{\OneR}{\EE\left(\mat{P}_{L2R}\left(\tilde{\mat{\Psi}}^{f,R}- \tilde{\mat{\Psi}}^{f,L} \right)\right)}.
\end{align}
Substituting $(I),(II),(III)$ in \eqref{final} we have rewritten the entropy update to be
\begin{equation}\label{final1}
\begin{split}
\IEnt=&\ipR{\OneR}{\EE\left(\mat{P}_{L2R}\left(\sum\limits_{\eqInd=1}^M\mat{V}^{\eqInd,R}\FstarLRK-\sum\limits_{\eqInd=1}^M\mat{V}^{\eqInd,L}\FstarLRK-\left(\tilde{\mat{\Psi}}^{f,R}- \tilde{\mat{\Psi}}^{f,L}\right)\right)\right)},\\
=&\ipR{\OneR}{\EE\left(\mat{P}_{L2R}\mat{ \tilde S}  \right)},
\end{split}
\end{equation}
with
\begin{equation}
\tilde{\mat S}:=\sum\limits_{\eqInd=1}^M\mat{V}^{\eqInd,R}\FstarLRK-\sum\limits_{\eqInd=1}^M\mat{V}^{\eqInd,L}\FstarLRK-\left(\tilde{\mat{\Psi}}^{f,R}- \tilde{\mat{\Psi}}^{f,L}\right).
\end{equation}
Next, we analyze a single component of $\mat{ \tilde S}$. Let $i=0,\dots N_L$ and $j=0,\dots N_R$, then
\begin{equation}\label{almostthere}
[\mat{ \tilde S}]_{ij}=\sum\limits_{\eqInd=1}^M\left(V^{q,R}_{j0}- V^{q,L}_{iN_L}\right)\tilde{{F}}^{\EC,\eqInd}(\bm{U}_{iN_L}^L,\bm{U}_{j0}^R)-(\tilde{\Psi}_{j0}^{f,R}-\tilde{\Psi}_{iN_L}^{f,L}).
\end{equation}
Since the entropy conservative fluxes is contained in \eqref{almostthere} and due to \eqref{eq:entCondition} we obtain
\begin{equation}
[\mat{ \tilde S}]_{ij}=0.
\end{equation}
Inserting this result in \eqref{final1} we arrive at
\begin{equation}
\begin{split}
\IEnt=&\ipR{\OneR}{\EE\left(\mat{P}_{L2R}\mat 0  \right)}=0.
\end{split}
\end{equation}
Therefore, $\IEnt$ is zero for $ \tilde{{F}}^{\EC,\eqInd,L}:=\EE\left(\mat{P}_{R2L}\FstarLRKT\right)$ and $ \tilde{{F}}^{\EC,\eqInd,R}:=\EE\left(\mat{P}_{L2R}\FstarLRK\right)$.
 \end{proof}

Note, that this proof is for general for any hyperbolic PDE with physical fluxes $f,g$ where we have an entropy. Based on this proof, we can construct entropy conservative schemes with algebraic non-conforming discretizations ($p$ refinement). To introduce additional flexibility, we next consider geometric non-conforming discretizations where the interfaces may not coincide ($h$ refinement).

%%%%%%%%%%%%%%%%%%%%%%%%%%%%%%%%%%%%%%%%%%%%%%%%%%%%%%%%%%%%%%%%%%%%%%%%%%%%%%%%%%
\subsection{Non-Conforming Interfaces with Hanging Nodes}\label{sec:h}
%%%%%%%%%%%%%%%%%%%%%%%%%%%%%%%%%%%%%%%%%%%%%%%%%%%%%%%%%%%%%%%%%%%%%%%%%%%%%%%%%%

In Sec. \ref{subsec: p} we created numerical fluxes for elements with a coinciding interface but differing polynomial orders. As such, each numerical interface flux only depends on one neighboring element. For example the numerical flux $\tilde{\bm{F}}^{\EC,R}$ in \eqref{8.2} only contained the projection operator $\mat P_{L2R}$, so it only depended on one neighboring element $L$. This was acceptable if the interfaces had no hanging nodes, however for the more general case of $h$ refinement as in Fig. \ref{hangingnodes} the interface coupling requires addressing contributions from many elements.
\begin{figure}[!ht]
\begin{center}
\begin{tikzpicture}[scale=0.5]

%%Gricell A
\color[rgb]{1,0,0}
\draw[very thick](0,-4) -- (0,4) ;
\color[rgb]{0,0,0}
\draw[very thick] (0,4) -- (-8,4)-- (-8,-4) -- (0,-4)  ;
\node at (-4,3) {$L_1$};
\draw[very thick] (-8,2) -- (0,2) ;
\node at (0,2) {X};
\node at (-4,1) {$L_2$};
\draw[very thick] (-8,0) -- (0,0) ;
\node at (0,0) {X};
\node at (-4,-1) {\vdots};
\draw[very thick] (-8,-2) -- (0,-2) ;
\node at (0,-2) {X};
\node at (-4,-3) {$L_E$};
\draw[<->] (-8.4,2.2) -- (-8.4,3.8);
\node at (-9.4,-3) {$\Delta_{L_E}$} ;
\draw[<->] (-8.4,0.2) -- (-8.4,1.8);
\node at (-9.4,1) {$\Delta_{L_2}$} ;
\draw[<->] (-8.4,-2.2) -- (-8.4,-3.8);
\node at (-9.4,3) {$\Delta_{L_1}$} ;
%%Gricell B
\draw[very thick] (8,-4) -- (0,-4) ;
\draw[very thick] (0,4) -- (8,4) ;
\draw[very thick] (8,4) -- (8,-4) ;
\draw[<->] (8.4,-4) -- (8.4,4);
\node at (9.4,0) {$\Delta_R$} ;
\node at (4,0) {$R$};

\end{tikzpicture}
\end{center}
\caption{$h$ refinement with hanging nodes X}
\label{hangingnodes}
\end{figure}
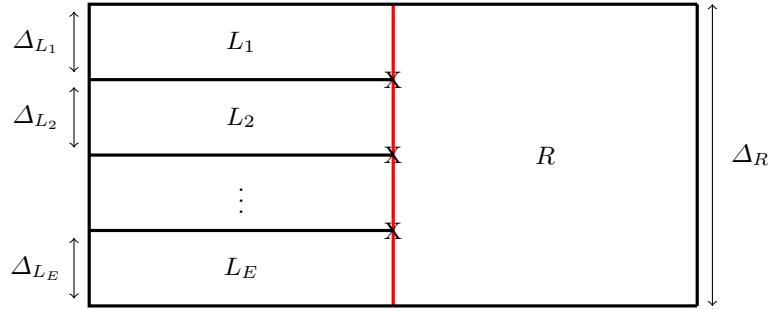

Throughout this section we will focus on discrete meshes as in Fig. \ref{hangingnodes}. For the $h$ refinement analysis we adapt the results derived in the previous section, where the interfaces coincide. Therefore, we consider all left elements as if they are \emph{\textit{one}} element $L=\bigcup_{i=1}^EL_i$. Again, this procedure ``hides'' the mortars within the new projection operators. Thus, we see that each sub-element $L$ has a conforming interface with element $R$ (red line) and has the nodes of the elements $L_i$ on the red lined interface
\begin{equation}
\eta^L=(\eta^{L_1}_0,\dots,\eta^{L_1}_{N_{L_1}},\dots,\eta^{L_E}_0,\dots,\eta^{L_E}_{N_{L_E}})^T,
\end{equation}
where $\eta^{L_i}$ denotes the vertical nodes of the element $L_i$. For element $L$ and element $R$ the projection operators need to satisfy the $\mass$-compatibility condition \eqref{7}:
\begin{equation}\label{hproj}
\mat{P}_{L2R}^T\mass_R=\mass_L\mat{P}_{R2L},
\end{equation}
where
\begin{equation}
\mass_L=\frac{1}{\Delta_R}\begin{pmatrix}
\Delta_{L_1}\mass_{L_1} & & \\
& \ddots & \\
& & \Delta_{L_E}\mass_{L_E}
\end{pmatrix},
\end{equation}
where $\Delta$ denotes the height of an element.
%Here, we assume that the projection operators are of degree $\tilde p>0$, meaning that
%\begin{equation}
%\begin{split}
%\mat{P}_{L2R}(\eta^L)^m&=(\eta^R)^m,\\
%\mat{P}_{R2L}(\eta^R)^m&=(\eta^L)^m,
%\end{split}
%\end{equation}
%where $m=0,\dots,\tilde p$.
We can interpret the ``large'' projection operators into parts that contribute from/to each of the smaller elements with the following structure
%By construction of the projection matrices we have the following structure
\begin{equation}\label{construct}
\mat P_{L2R}=\begin{bmatrix}
\mat P_{L_12R}&\dots & \mat P_{L_E2R}
\end{bmatrix},
\end{equation}
and
 \begin{equation}
\mat P_{R2L}=\begin{bmatrix}
\mat P_{R2L_1}\\ \vdots \\ \mat P_{R2L_E}
\end{bmatrix}.
\end{equation}
With this new notation we adapt the $\mass$-compatibility condition \eqref{hproj} to become
\begin{equation}\label{hproj2}
\Delta_{L_i} \mat{P}_{R2L_i}^T\massLi=\Delta_R\massR\mat{P}_{L_i2R},\quad i = 1,\ldots,E.
\end{equation}

As in Sec. \ref{sec:p} we choose the numerical surface fluxes so that the scheme is primary and entropy conservative. We note that for the $h$ non-conforming case (just like $p$ non-conforming) the result of Lemma \ref{thm1} is still valid. That is, the volume contributions have no effect on the non-conforming approximation. Only a careful definition of the interface coupling is needed to construct an entropy stable non-conforming DG approximation. Therefore, we analyze all terms which are related to the interface connecting $L_1,\dots,L_E$ and $R$. Similar to \eqref{ConCond} and \eqref{ICcond}, we arrive at the following terms
\begin{align}
\ICon=&\ipR{\OneR}{\tilde{\bm{F}}^{\EC,\eqInd,R}}-\sum_{i=1}^E\ipLi{\OneLi}{\tilde{\bm{F}}^{\EC,\eqInd,L_i}},\label{ConCond2}\\
\IEnt=&\sum\limits_{\eqInd=1}^M\ipR{\evR}{\tilde{\bm{F}}^{\EC,\eqInd,R}}-\ipR{\OneR}{\tilde{\mat{\Psi}}^{R,f}}\notag-\sum_{i=1}^E\left(\sum\limits_{\eqInd=1}^M\ipLi{\bm{V}^{\eqInd,L_i}}{\tilde{\bm{F}}^{\EC,\eqInd,L_i}}-\ipLi{\OneLi}{\tilde{\mat{\Psi}}^{L_i,f}}\right),\label{ICcond2}
\end{align}
which need to be zero to obtain a discretely primary and entropy conservative scheme.\\

\begin{cor}\label{cor}
 Given a set of projection operators that satisfy \eqref{hproj2}, we can prove analogously to Theorem \ref{thm: p} that the fluxes
\begin{equation}
 \tilde{{F}}^{\EC,\eqInd,R}:=\EE\left(\mat{P}_{L2R}\FstarLRK\right)
=\sum_{i=1}^E\EE\left(\mat{P}_{L_i2R}\FstarLiRK\right)\label{ECfluxR},
\end{equation}
and
\begin{equation}
 \tilde{{F}}^{\EC,\eqInd,L_i}:=\EE\left(\mat{P}_{R2L_i}\FstarLiRKT\right)\label{ECfluxL},
\end{equation}
for $i=1,\dots,E$ lead to primary and entropy conservative schemes.
\end{cor}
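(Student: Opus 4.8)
The plan is to treat the union of the small elements $L=\bigcup_{i=1}^E L_i$ as a single \emph{virtual} left element and then invoke Theorem \ref{thm: p} essentially verbatim. Concretely, I would regard the combined norm $\massL=\tfrac{1}{\Delta_R}\diag(\Delta_{L_1}\massLi,\dots,\Delta_{L_E}\massLi)$, the block-structured projections \eqref{construct}, and the row-stacked flux matrix $\FstarLRK$ as precisely the objects $\massL$, $\mat{P}_{L2R}$, $\mat{P}_{R2L}$, $\FstarLRK$ appearing in Theorem \ref{thm: p}, with the single conforming interface now being the red line shared by the macro-element $L$ and the element $R$. The entire argument is then a reduction: verify that the macro-element inherits the hypotheses of Theorem \ref{thm: p}, apply that theorem, and unpack the block structure.

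First I would check the hypotheses. The only nontrivial one is the $\massL$-compatibility \eqref{7}. Writing $\mat{P}_{L2R}^T$ as the vertical stack of the blocks $\mat{P}_{L_i2R}^T$ and using the block-diagonal form of $\massL$, the single identity $\mat{P}_{L2R}^T\massR=\massL\mat{P}_{R2L}$ splits block-by-block into exactly \eqref{hproj2}; hence the combined operators are $\massL$-compatible. Exactness on constants, $\mat{P}_{R2L}\OneR=\OneL$ and $\mat{P}_{L2R}\OneL=\OneR$, follows from the corresponding sub-element properties, which is all that the treatment of the potential term $(III)$ in Theorem \ref{thm: p} requires.

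The step needing genuine care is showing that the summed interface functionals \eqref{ConCond2} and \eqref{ICcond2} coincide with the single-interface functionals \eqref{ConCond} and \eqref{ICcond} for the pair $(L,R)$ once the sum over $i$ is absorbed into one discrete inner product over $L$. I expect this to be the \textbf{main obstacle}: the per-element pairings $\ipLi{\cdot}{\cdot}$ carry the bare norms $\massLi$, whereas the combined pairing $\ipL{\cdot}{\cdot}$ carries $\massL$ with the metric weights $\Delta_{L_i}/\Delta_R$. I would verify that these weights are exactly compensated by the contravariant scaling built into the surface fluxes $\tilde{{F}}^{\EC,\eqInd,L_i}$, so that $\sum_{i=1}^E\ipLi{\OneLi}{\tilde{\bm{F}}^{\EC,\eqInd,L_i}}$ collapses to $\ipL{\OneL}{\tilde{\bm{F}}^{\EC,\eqInd,L}}$ and the analogous entropy and potential sums collapse to $\sum_{\eqInd}\ipL{\evL}{\tilde{\bm{F}}^{\EC,\eqInd,L}}-\ipL{\OneL}{\tilde{\bm{\Psi}}^{f,L}}$, now under the combined norm. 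This is precisely the bookkeeping that distinguishes the statement from a trivial restatement of Theorem \ref{thm: p}. Granting it, \eqref{ConCond2} and \eqref{ICcond2} become literally \eqref{ConCond} and \eqref{ICcond}, so Theorem \ref{thm: p} applies and forces $\ICon=0$ and $\IEnt=0$, with the component-wise vanishing again coming from the entropy-conservation identity \eqref{eq:entCondition}.

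It then remains only to unpack the block structure to recover the element-wise formulas. Because the rows of $\FstarLRK$ are partitioned by sub-element, the horizontal block form \eqref{construct} gives $\mat{P}_{L2R}\FstarLRK=\sum_{i=1}^E\mat{P}_{L_i2R}\FstarLiRK$, and linearity of $\EE$ yields \eqref{ECfluxR}. Dually, reading off the $L_i$-block of $\EE(\mat{P}_{R2L}\FstarLRKT)$ isolates the $L_i$ columns of $\FstarLRKT$ and the $L_i$ rows of $\mat{P}_{R2L}$, which picks out exactly $\EE(\mat{P}_{R2L_i}\FstarLiRKT)$ and hence \eqref{ECfluxL}. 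This completes the reduction of the corollary to Theorem \ref{thm: p}.
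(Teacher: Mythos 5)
Your proposal is correct, and it reaches the result by a somewhat different route than the paper. The paper's proof generalizes Lemma~\ref{Lemma} to the $\Delta$-weighted identity $\Delta_{L_i}\ip{\bm a}{N_{L_i}}{\EE\left(\mat P_{R2L_i}\mat B^T\right)}=\Delta_R\ipR{\OneR}{\EE\left(\mat P_{L_i2R}\mat A\mat B\right)}$, which follows in one line from \eqref{hproj2}, and then re-runs the argument of Theorem~\ref{thm: p} term by term on the sums over $i$ in \eqref{ConCond2} and \eqref{ICcond2}, ``keeping track of the adjustable element sizes.'' You instead invoke Theorem~\ref{thm: p} as a black box for the macro-element $L=\bigcup_{i=1}^E L_i$ equipped with the weighted norm $\massL$: you check that the block form of the compatibility condition \eqref{7} for $\left(\massL,\mat P_{L2R},\mat P_{R2L}\right)$ is exactly \eqref{hproj2}, that constants are preserved, and---the genuinely non-trivial point, which you correctly isolate---that the per-element interface functionals carrying the bare norms $\massLi$ and the contravariant scalings $\Delta_{L_i}/2$ collapse to the macro functionals under $\massL$ with the common factor $\Delta_R/2$, since the $\Delta_{L_i}/\Delta_R$ weights in $\massL$ are precisely compensated by the metric factors built into the contravariant fluxes. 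Your block-unpacking at the end, namely $\mat P_{L2R}\FstarLRK=\sum_{i=1}^E\mat P_{L_i2R}\FstarLiRK$ and the identification of the $L_i$-rows of $\EE\left(\mat P_{R2L}\FstarLRKT\right)$ with $\EE\left(\mat P_{R2L_i}\FstarLiRKT\right)$, is also right. What your reduction buys: it makes transparent that $h$ (and $h/p$) refinement is literally an instance of the $p$-non-conforming theorem once the norm carries the metric weights, because nothing in the algebra of Theorem~\ref{thm: p} uses that the left interface nodes form a single polynomial basis---only diagonal positive-definite norms, the compatibility \eqref{7}, exactness on constants, and the pointwise entropy condition \eqref{eq:entCondition}. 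What the paper's route buys: the weighted lemma and the explicit per-sub-element $\Delta$-bookkeeping are reused almost verbatim in the entropy-stability proof of Theorem~\ref{thm: diss}, where the dissipation terms are naturally tracked sub-element by sub-element, so a pure macro-element reduction would have to be unpacked again there anyway.
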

\begin{proof}
This result requires a straightforward modification of the result from Lemma \ref{Lemma}
\begin{equation}
\Delta_{L_i}\ipL{\bm a}{\EE(\mat P_{R2L_i}\mat B^T)}=\Delta_R\ipR{\OneR}{\EE(\mat P_{L_i2R}\mat A\mat B)}.
\end{equation}
Now, the proof follows identical steps as given for Lemma \ref{thm1}, but now keeping track of the adjustable element sizes.
\end{proof}

%%%%%%%%%%%%%%%%%%%%%%%%%%%%%%%%%%%%%%%%%%%%%%%%%%%%%%%%%%%%%%%%%%%%%%%%%%%%%%%%%%
\subsection{Including Dissipation within the Numerical Surface Flux}\label{sec:Dissipation}
%%%%%%%%%%%%%%%%%%%%%%%%%%%%%%%%%%%%%%%%%%%%%%%%%%%%%%%%%%%%%%%%%%%%%%%%%%%%%%%%%%

In Sec. \ref{sec:p} and Sec. \ref{sec:h} we derived primary and entropy conservative schemes for non-linear problems on non-conforming meshes with $h/p$ refinement. From these results, we can include interface dissipation and arrive at an entropy stable discretization for an arbitrary non-conforming rectangular mesh.

While conservation laws are entropy conservative for smooth solutions, discontinuities in the form of shocks can develop in finite time for non-linear problems despite smooth initial data. Considering shocks, the mathematical entropy should decay, which needs to be reflected within our numerical scheme. Thus, we will describe how to include interface dissipation which leads to an entropy stable scheme. We note that the numerical volume flux in \eqref{eq:ECDG} is still an entropy conservative flux which satisfies \eqref{eq:entCondition}.

We will focus on the general case, where we have differing nodal distributions as well as hanging nodes ($h$ refinement) as in Fig. \ref{hangingnodes}.  As in Sec. \ref{sec:h} we assume that the projection operators satisfy the compatibility condition \eqref{hproj2}.\\

\begin{thm}\label{thm: diss}
The scheme is primary conservative and entropy stable, for the following numerical surface fluxes.
\begin{equation}\label{ESfluxL}
\tilde{\bm{F}}^{\ES,\eqInd,L_i}= \tilde{\bm{F}}^{\EC,\eqInd,L_i}-\frac{\lambda}{2}\left(\mat P_{R2L_i} \bm{V}^{\eqInd,R}- \bm{V}^{\eqInd,L_i} \right)
\end{equation}
\begin{equation}\label{ESfluxR}
\tilde{\bm{F}}^{\ES,\eqInd,R}= \tilde{\bm{F}}^{\EC,\eqInd,R}-\frac{\lambda}{2}\sum\limits_{i=1}^E\mat P_{L_i2R}\left(\mat P_{R2L_i} \bm{V}^{\eqInd,R} - \bm{V}^{\eqInd,L_i}\right)
\end{equation}
where $\lambda>0$ is a scalar which controls the dissipation rate.
\end{thm}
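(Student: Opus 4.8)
The plan is to exploit the linearity of the update functionals in the surface fluxes together with the splitting of the proposed fluxes into the entropy conservative fluxes of Corollary~\ref{cor} plus a dissipation correction,
\begin{equation*}
\tilde{\bm{F}}^{\ES,\eqInd,L_i}= \tilde{\bm{F}}^{\EC,\eqInd,L_i}-\frac{\lambda}{2}\,\bm{w}_i,\qquad
\tilde{\bm{F}}^{\ES,\eqInd,R}= \tilde{\bm{F}}^{\EC,\eqInd,R}-\frac{\lambda}{2}\sum_{i=1}^E\mat P_{L_i2R}\,\bm{w}_i,
\end{equation*}
where $\bm{w}_i:=\mat P_{R2L_i}\evR-\bm{V}^{\eqInd,L_i}$ is the projected jump of the entropy variables across the interface. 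Since $\ICon$ and $\IEnt$ in \eqref{ConCond2}--\eqref{ICcond2} depend linearly on the fluxes, and the dissipation leaves the entropy flux potential matrices $\tilde{\mat\Psi}$ untouched, I would write $\ICon=\ICon^{\EC}+\ICon^{\mathrm{diss}}$ and $\IEnt=\IEnt^{\EC}+\IEnt^{\mathrm{diss}}$. Corollary~\ref{cor} already gives $\ICon^{\EC}=\IEnt^{\EC}=0$, so everything reduces to the two dissipation remainders.

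For primary conservation I would insert the dissipation fluxes into \eqref{ConCond2} and use the $\mass$-compatibility condition \eqref{hproj2} in the form $\massR\mat P_{L_i2R}=\tfrac{\Delta_{L_i}}{\Delta_R}\mat P_{R2L_i}^T\massLi$ to pull the right-element inner product back onto each sub-element. Combined with the exactness of the projections on constants, $\mat P_{R2L_i}\OneR=\OneLi$, the right contribution $\ipR{\OneR}{\mat P_{L_i2R}\bm{w}_i}$ and the left contribution $\ipLi{\OneLi}{\bm{w}_i}$ agree after the common height ratio $\Delta_{L_i}/\Delta_R$ (carried exactly as in the proof of Corollary~\ref{cor}) is accounted for, whence $\ICon^{\mathrm{diss}}=0$.

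The crucial step is entropy stability, where the goal is to recast $\IEnt^{\mathrm{diss}}$ as a manifestly signed quadratic form. Substituting the dissipation fluxes into \eqref{ICcond2} gives
\begin{equation*}
\IEnt^{\mathrm{diss}}=-\frac{\lambda}{2}\sum_{\eqInd=1}^M\left[\ipR{\evR}{\sum_{i=1}^E\mat P_{L_i2R}\bm{w}_i}-\sum_{i=1}^E\ipLi{\bm{V}^{\eqInd,L_i}}{\bm{w}_i}\right].
\end{equation*}
Applying \eqref{hproj2} to the right term turns $\ipR{\evR}{\mat P_{L_i2R}\bm{w}_i}$ into $\tfrac{\Delta_{L_i}}{\Delta_R}(\mat P_{R2L_i}\evR)^T\massLi\bm{w}_i$; carrying the same ratio on the matching left term and subtracting leaves exactly $\tfrac{\Delta_{L_i}}{\Delta_R}\bm{w}_i^T\massLi\bm{w}_i$ per sub-element, so that
\begin{equation*}
\IEnt^{\mathrm{diss}}=-\frac{\lambda}{2}\sum_{\eqInd=1}^M\sum_{i=1}^E\frac{\Delta_{L_i}}{\Delta_R}\,\bm{w}_i^T\massLi\bm{w}_i\le 0,
\end{equation*}
because each $\massLi$ is diagonal with strictly positive LGL weights and $\lambda>0$. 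This yields $\IEnt\le0$, i.e.\ entropy stability.

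I expect the main obstacle to be bookkeeping rather than insight: one must propagate the element-height ratios $\Delta_{L_i}/\Delta_R$ arising from \eqref{hproj2} consistently through both remainders, and verify that the cross terms \emph{assemble into} --- rather than merely bound --- the single squared jump $\bm{w}_i^T\massLi\bm{w}_i$ on each sub-element. What forces this exact assembly is that the identical quantity $\bm{w}_i=\mat P_{R2L_i}\evR-\bm{V}^{\eqInd,L_i}$ appears in the $L_i$ flux and, after projection by $\mat P_{L_i2R}$, in the $R$ flux; positivity then rests solely on the diagonal positive-definiteness of the mass matrices, with no smoothness or consistency assumption on the data.
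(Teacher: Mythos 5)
Your proposal is correct and follows essentially the same route as the paper's proof: split the fluxes into the entropy conservative part (handled by Corollary~\ref{cor}) plus the dissipation correction, cancel the conservation remainder via the $\mass$-compatibility condition \eqref{hproj2} together with exactness of $\mat P_{R2L_i}$ on constants, and use \eqref{hproj2} again to assemble the entropy remainder into the signed quadratic form $-\bm{w}_i^T\massLi\bm{w}_i$ per sub-element. The only difference is bookkeeping of the element-height factors (your $\Delta_{L_i}/\Delta_R$ versus the paper's explicit $\lambda\Delta_{L_i}/4$ weights), which amounts to an overall positive normalization of the interface functionals and does not affect the argument.
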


\begin{proof}
By including dissipation we can prove primary conservation by substituting the new fluxes \eqref{ESfluxL} and \eqref{ESfluxR} into \eqref{ConCond2}
\begin{equation}
\resizebox{\hsize}{!}{$
\ICon=\ipR{\OneR}{\tilde{\bm{F}}^{\EC,\eqInd,R}-\frac{\lambda}{2}\sum\limits_{i=1}^E\mat P_{L_i2R}\left(\mat P_{R2L_i}\bm{V}^{\eqInd,R} - \bm{V}^{\eqInd,L_i}\right)}-\sum\limits_{i=1}^E\ipLi{\OneLi}{\tilde{ \bm{F}}^{\EC,\eqInd,L_i}-\frac{\lambda}{2}\left(\mat P_{R2L_i} \bm{V}^{\eqInd,R}- \bm{V}^{\eqInd,L_i} \right)}.$}
\end{equation}
Due to Corollary \ref{cor} we know that
\begin{equation}
\ipR{\OneR}{\tilde{{F}}^{\EC,\eqInd,R}}-\sum_{i=1}^E\ipLi{\OneLi}{\tilde{{F}}^{\EC,\eqInd,L_i}}=0,
\end{equation}
and we find that
\begin{equation}
\ICon=-\frac{\lambda\Delta_R}{4}\sum_{i=1}^E\OneRT\massR\mat P_{L_i2R}\left(\mat P_{R2L_i} \bm{V}^{\eqInd,R} - \bm{V}^{\eqInd,L_i}\right)+\sum_{i=1}^E\frac{\lambda\Delta_{L_i}}{4}\OneLiT\massLi\left(\mat P_{R2L_i} \bm{V}^{\eqInd,R}- \bm{V}^{\eqInd,L_i} \right).
\end{equation}
Due to \eqref{hproj2} we arrive at
\begin{equation}
\ICon=-\sum_{i=1}^E\frac{\lambda\Delta_{L_i}}{4}\OneRT\mat P_{R2L_i}^T\massLi\left(\mat P_{R2L_i}\bm{V}^{\eqInd,R}- \bm{V}^{\eqInd,L_i}\right)+\sum_{i=1}^E\frac{\lambda\Delta_{L_i}}{4}\OneLiT\massLi\left(\mat P_{R2L_i}\bm{V}^{\eqInd,R}- \bm{V}^{\eqInd,L_i} \right),
\end{equation}
assuming that $\mat P_{R2L_i}$ can project a constant exactly, meaning $\mat P_{R2L_i}\OneR=\OneLi$, it yields
\begin{equation}
\ICon=0,
\end{equation}
which leads to a primary conservative scheme.

To prove entropy stability we include \eqref{ESfluxL} and \eqref{ESfluxR} in \eqref{ICcond} and adapt the results from Corollary \ref{cor} to find that
\begin{equation}
\label{EntDiss}
\begin{split}
\IEnt=&-\sum\limits_{\eqInd=1}^M\frac{\Delta_R}{2}\ipR{\bm{V}^{\eqInd,R}}{\frac{\lambda}{2}\sum_{i=1}^E\mat P_{L_i2R}\left(\mat P_{R2L_i}\bm{V}^{\eqInd,R}- \bm{V}^{\eqInd,L_i}\right)}\\
&+\sum\limits_{\eqInd=1}^M\sum_{i=1}^E\frac{\Delta_{L_i}}{2}\ipLi{ \bm{V}^{\eqInd,L_i}}{\frac{\lambda}{2}\left(\mat P_{R2L_i}\bm{V}^{\eqInd,R}- \bm{V}^{\eqInd,L_i} \right)},\\
=&-\frac{\lambda\Delta_R}{4}\sum\limits_{\eqInd=1}^M\sum_{i=1}^E \evRT\massR\mat P_{L_i2R}\left(\mat P_{R2L_i}\bm{V}^{\eqInd,R}- \bm{V}^{\eqInd,L_i}\right)\\
&+\sum\limits_{\eqInd=1}^M\sum_{i=1}^E\frac{\lambda\Delta_{L_i}}{4} \left(\bm{V}^{\eqInd,L_i}\right)^{\!T}\!\massLi\left(\mat P_{R2L_i}\bm{V}^{\eqInd,R}- \bm{V}^{\eqInd,L_i}\right).
\end{split}
\end{equation}
Again, we apply the condition \eqref{hproj2} and obtain
\begin{equation}
\begin{split}
\IEnt=&-\sum_{i=1}^E\frac{\lambda\Delta_{L_i}}{4} \evRT\mat P_{R2L_i}^T\massLi\left(\mat P_{R2L_i} \bm{V}^{\eqInd,R} - \bm{V}^{\eqInd,L_i}\right)\\
&+\sum_{i=1}^E\frac{\lambda\Delta_{L_i}}{4} \left(\bm{V}^{\eqInd,L_i}\right)^{\!T}\!\massLi\left(\mat P_{R2L_i} \bm{V}^{\eqInd,R}- \bm{V}^{\eqInd,L_i} \right),\\
=&-\sum_{i=1}^E\frac{\lambda\Delta_{L_i}}{4}\left(\mat P_{R2L_i} \bm{V}^{\eqInd,R}- \bm{V}^{\eqInd,L_i}\right)^T\!\massLi\left(\mat P_{R2L_i} \bm{V}^{\eqInd,R} - \bm{V}^{\eqInd,L_i}\right)\le 0,
\end{split}
\end{equation}
since each $\massLi$ is a symmetric positive definite matrix and $\lambda\Delta_{L_i}>0$, so the non-conforming DG scheme is entropy stable.
\end{proof}
Note, that this proof also holds for deriving an entropy stable scheme for geometrically conforming interfaces but differing polynomial order ($p$ refinement) by setting $E=1$.

To summarize, we derived a primary conservative and entropy stable DGSEM for non-linear problems on general $h/p$ non-conforming meshes. Note, that all results hold for an arbitrary system of non-linear conservation laws as long as entropy conservative numerical fluxes exist that satisfy \eqref{eq:entCondition}.

%%%%%%%%%%%%%%%%%%%%%%%%%%%%%%%%%%%%%%%%%%%%%%%%%%%%%%%%%%%%%%%%%%%%%%%%%%%%%%%%%%
\section{Numerical results}\label{sec:numResults}
%%%%%%%%%%%%%%%%%%%%%%%%%%%%%%%%%%%%%%%%%%%%%%%%%%%%%%%%%%%%%%%%%%%%%%%%%%%%%%%%%%

For all numerical results presented in this work we considered the two dimensional Euler equations
\begin{equation}
\begin{pmatrix}
\rho\\
\rho u\\
\rho v\\
E
\end{pmatrix}_t
+\begin{pmatrix}
\rho u\\
\rho u^2+p\\
\rho u v\\
u(E+p)
\end{pmatrix}_x
+\begin{pmatrix}
\rho v\\
\rho u v\\
\rho v^2+p\\
v(E+p)
\end{pmatrix}_y
=\begin{pmatrix}
0\\
0\\
0\\
0
\end{pmatrix},
\end{equation}
on $\Omega\subset\mathbb{R}^2$ and $t\in[0,T]\subset\mathbb{R}^+$ with $E=\frac{1}{2}\rho(u^2+v^2)+\frac{p}{\gamma-1}$ and adiabatic coefficient $\gamma=1.4$.

The entropy conservative/stable non-conforming implementation of the DGSEM of the Euler equations uses the Ismail and Roe entropy conserving flux \cite{Ismail2009} in \eqref{8.1} and \eqref{8.2} for $p$ refinement and in \eqref{ECfluxR} and \eqref{ECfluxL} to apply $h$ refinement. %The Ismail and Roe entropy conserving flux has been constructed to satisfy \eqref{eq:entCondition} and so it can be used in our numerical scheme.

We use an explicit time integration method to advance the approximate solution. In particular, we select the five-stage, fourth-order low-storage Runge-Kutta method of Carpenter and Kennedy \cite{Kennedy1994}. The explicit time step $\Delta t$ is selected by the CFL condition \cite{Friedrich2016}
\begin{equation}
\Delta t:=CFL\frac{\min_i\{\frac{\Delta x_i}{2}\frac{\Delta y_i}{2}\}}{\max_j\{N_j+1\}\lambda_{\mathrm{max}}},
\end{equation}
where $\Delta x_i$ and $\Delta y_i$ denote the width in $x$- and $y$-direction of the $i^{\mathrm{th}}$ element, $N_j$ denotes the number of nodes in one dimension of the $j^{\mathrm{th}}$ element, and $\lambda_{\mathrm{max}}$ denotes the maximum eigenvalue of the flux Jacobians over the whole domain.

In this section, we verify the experimental order of convergence as well as conservation of the primary quantities and entropy for the novel $h/p$ non-conforming DGSEM described in this work.

%%%%%%%%%%%%%%%%%%%%%%%%%%%%%%%%%%%%%%%%%%%%%%%%%%%%%%%%%%%%%%%%%%%%%%%%%%%%%%%%%%
\subsection{Experimental Order of Convergence}
%%%%%%%%%%%%%%%%%%%%%%%%%%%%%%%%%%%%%%%%%%%%%%%%%%%%%%%%%%%%%%%%%%%%%%%%%%%%%%%%%%

For our numerical convergence experiments, we set $T=1$ and $CFL=0.2$. We analyze the experimental order of convergence for an entropy stable flux. Therefore, we include dissipation to the baseline entropy conservative Ismail and Roe flux \cite{Ismail2009} at each element interface. In particular, we consider a local Lax-Friedrichs type dissipation term with $\bm z =n_1\bm u+n_2\bm v$, where $\bm n=(n_1,n_2)^T$ denotes the normal vector
\begin{equation}
\begin{aligned}
\lambda_L&=\max\left\{||\bm z_L+\bm c_L||_\infty, ||\bm z_L||_\infty,|| \bm z_L-\bm c_L||_\infty\right\},\\
\lambda_R&=\max\left\{||\bm z_R+\bm c_R||_\infty, ||\bm z_R||_\infty,|| \bm z_R-\bm c_R||_\infty\right\},\\
\lambda&=\frac{1}{2}\max\left\{\lambda_L,\lambda_R\right\},
\end{aligned}
\end{equation}
with $c=\sqrt{\frac{\gamma p}{\rho}}$. By including $\lambda$ in \eqref{ESfluxL} and \eqref{ESfluxR}.

For convergence studies we consider the isentropic vortex advection problem taken from \cite{Chen2016}. Here, we set the domain to be $\Omega=[0,10]\times[0,10]$. The initial conditions are
\begin{equation}
\bm{w}(x,y,0) \equiv \bm{w}_0(x,y)
=
\begin{pmatrix}
T^{\frac{1}{\gamma-1}}\\[0.1cm]
1-(y-5)\phi(r)\\[0.1cm]
1+(x-5)\phi(r)\\[0.1cm]
T^{\frac{\gamma}{\gamma-1}}\\[0.1cm]
\end{pmatrix},
\end{equation}
where we introduce the vector of primitive variables $\bm{w}(x,y,t) = (\rho,u,v,p)^T$ and
\begin{equation}
\begin{split}
r(x,y)=&\sqrt{(x-5)^2+(y-5)^2},\\[0.1cm]
T(x,y)=&1-\frac{\gamma-1}{2\gamma}\phi(r^2),\\[0.1cm]
\phi(r)=&\varepsilon e^{\alpha(1-r^2)},\\[0.1cm]
\end{split}
\end{equation}
with $\varepsilon=\frac{5}{2\pi}$ and $\alpha=0.5$. With these initial condition the vortex is advected along the diagonal of the domain. We impose Dirichlet boundary conditions using the exact solution which is easily determined to be
\begin{equation}
\bm{w}(x,y,t)
=
\bm{w}_0(x-t,y-t).
\end{equation}

To examine the convergence order for a $h/p$ non-conforming method we consider a general mesh setup that includes pure $p$ non-conforming interfaces, pure $h$ non-conforming interfaces and $h/p$ non-conforming interfaces. Therefore we define three element types $A, B, C$.
Here, the mesh is prescribed in the following way
\begin{center}
\begin{itemize}
\item Elements of type $A$ in $\Omega_1=[0,5]\times[0,10]$\\
\item Elements of type $B$ in $\Omega_1=[5,10]\times[0,5]$\\
\item Elements of type $C$ in $\Omega_1=[5,10]\times[5,10]$.
\end{itemize}
\end{center}
For each level of the convergence analysis, a single element is divided into four sub-elements. This mesh refinement strategy is sketched in Fig. \ref{mesh3}.

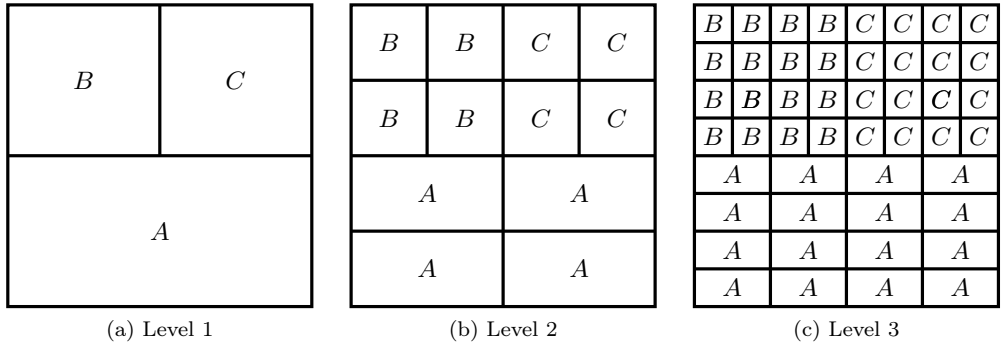
\begin{figure}[!ht]
\begin{center}
\subfloat[Level 1]
{
\begin{tikzpicture}[scale=0.25]
%%Gricell A
\draw[very thick] (-8,-8) -- (-8,8) -- (8,8) -- (8,-8) -- cycle ;
\draw[very thick] (-8,0) -- (8,0) ;
\draw[very thick] (0,0) -- (0,8) ;
\node at (0,-4) {$A$};
\node at (-4,4) {$B$};
\node at (4,4) {$C$};
\end{tikzpicture}
}\hspace{0.15cm}
\subfloat[Level 2]
{
\begin{tikzpicture}[scale=0.25]
%%Gricell A
\draw[very thick] (-8,-8) -- (-8,8) -- (8,8) -- (8,-8) -- cycle ;
\draw[very thick] (-8,0) -- (8,0) ;
\draw[very thick] (0,0) -- (0,-8) ;
\draw[very thick] (-8,-4) -- (8,-4) ;
\node at (4,-2) {$A$};
\node at (-4,-2) {$A$};
\node at (4,-6) {$A$};
\node at (-4,-6) {$A$};
\draw[very thick] (0,0) -- (0,8) ;
\draw[very thick] (-8,4) -- (0,4) ;
\draw[very thick] (-4,0) -- (-4,8) ;
\node at (-2,6) {$B$};
\node at (-6,6) {$B$};
\node at (-2,2) {$B$};
\node at (-6,2) {$B$};
\draw[very thick] (8,4) -- (0,4) ;
\draw[very thick] (4,0) -- (4,8) ;
\node at (2,6) {$C$};
\node at (6,6) {$C$};
\node at (2,2) {$C$};
\node at (6,2) {$C$};
\end{tikzpicture}
}\hspace{0.15cm}
\subfloat[Level 3]
{
\begin{tikzpicture}[scale=0.25]
%%Gricell A
\draw[very thick] (-8,-8) -- (-8,8) -- (8,8) -- (8,-8) -- cycle ;
\draw[very thick] (-8,0) -- (8,0) ;
\draw[very thick] (0,0) -- (0,-8) ;
\draw[very thick] (-4,0) -- (-4,-8) ;
\draw[very thick] (4,0) -- (4,-8) ;
\draw[very thick] (-8,-2) -- (8,-2) ;
\draw[very thick] (-8,-4) -- (8,-4) ;
\draw[very thick] (-8,-6) -- (8,-6) ;
\node at (6,-1) {$A$};
\node at (6,-3) {$A$};
\node at (-6,-1) {$A$};
\node at (-6,-3) {$A$};
\node at (6,-5) {$A$};
\node at (6,-7) {$A$};
\node at (-6,-5) {$A$};
\node at (-6,-7) {$A$};
\node at (2,-1) {$A$};
\node at (2,-3) {$A$};
\node at (-2,-1) {$A$};
\node at (-2,-3) {$A$};
\node at (2,-5) {$A$};
\node at (2,-7) {$A$};
\node at (-2,-5) {$A$};
\node at (-2,-7) {$A$};
\draw[very thick] (0,0) -- (0,8) ;
\draw[very thick] (-8,4) -- (0,4) ;
\draw[very thick] (-8,2) -- (0,2) ;
\draw[very thick] (-8,6) -- (0,6) ;
\draw[very thick] (-4,0) -- (-4,8) ;
\draw[very thick] (-4,0) -- (-4,8) ;
\draw[very thick] (-2,0) -- (-2,8) ;
\draw[very thick] (-6,0) -- (-6,8) ;
\node at (-1,7) {$B$};
\node at (-7,7) {$B$};
\node at (-1,1) {$B$};
\node at (-7,1) {$B$};
\node at (-3,7) {$B$};
\node at (-5,7) {$B$};
\node at (-3,1) {$B$};
\node at (-5,3) {$B$};
\node at (-1,5) {$B$};
\node at (-7,5) {$B$};
\node at (-1,3) {$B$};
\node at (-7,3) {$B$};
\node at (-3,5) {$B$};
\node at (-5,5) {$B$};
\node at (-3,3) {$B$};
\node at (-5,3) {$B$};
\node at (-5,1) {$B$};
\draw[very thick] (0,0) -- (0,8) ;
\draw[very thick] (8,4) -- (0,4) ;
\draw[very thick] (8,2) -- (0,2) ;
\draw[very thick] (8,6) -- (0,6) ;
\draw[very thick] (4,0) -- (4,8) ;
\draw[very thick] (2,0) -- (2,8) ;
\draw[very thick] (6,0) -- (6,8) ;
\node at (1,7) {$C$};
\node at (7,7) {$C$};
\node at (1,1) {$C$};
\node at (7,1) {$C$};
\node at (3,7) {$C$};
\node at (5,7) {$C$};
\node at (3,1) {$C$};
\node at (5,3) {$C$};
\node at (1,5) {$C$};
\node at (7,5) {$C$};
\node at (1,3) {$C$};
\node at (7,3) {$C$};
\node at (3,5) {$C$};
\node at (5,5) {$C$};
\node at (3,3) {$C$};
\node at (5,3) {$C$};
\node at (5,1) {$C$};
\end{tikzpicture}
}
\end{center}
\caption{Three levels of mesh refinement used to investigate the experimental order of convergence for the $h/p$ non-conforming DG approximation.}
\label{mesh3}
\end{figure}
The DG derivative matrix (i.e. the SBP operator) depends on the polynomial degree within each element. Therefore, in the case of $p$ refinement the SBP operator may differ between elements $A,B,C$.

%%%%%%%%%%%%%%%%%%%%%%%%%%%%%%%%%%%%%%%%%%%%%%%%%%%%%%%%%%%%%%%%%%%%%%%%%%%%%%%%%%
%\subsubsection{Experimental Order of Convergence}
%%%%%%%%%%%%%%%%%%%%%%%%%%%%%%%%%%%%%%%%%%%%%%%%%%%%%%%%%%%%%%%%%%%%%%%%%%%%%%%%%%

We consider the DGSEM on Legendre-Gauss-Lobatto nodes as in \cite{Gassner2013}. To do so, we investigate the following configurations:
\begin{itemize}
\item Element $A$ with a degree $p_A=p$ operator in $x$- and $y$-direction\\
\item Element $B$ with a degree $p_B=p+1$ operator in $x$- and $y$-direction\\
\item Element $C$ with a degree $p_C=p$ operator in $x$- and $y$-direction,
\end{itemize}
with $p=2,3$.

With such element distributions we consider $p$ refinement along the line $x=5$ for $y\in[5,10]$ and $h$ refinement along the line $y=5$ for $x\in[0,10]$. To carefully treat the non-conforming interfaces we create the projection operators described in Appendix \ref{sec:App B}.
%For these operators, Bui-Thanh and Ghattas have proven for linear problems, that the convergence order is $p+\frac{1}{2}$, where $p$ denotes the minimum degree of all differentiation matrices.
With these operators included in the non-conforming entropy stable scheme we obtain the experimental order of convergence (EOC) rates collected in Tables \ref{DGMix2} and \ref{DGMix3}.
\begin{table}[ht]
\begin{center}
\textbf{DG operators with mixed polynomial degree}\\
\vspace{0.3cm}
\begin{minipage}{0.4\textwidth}
\begin{center}
\begin{tabular}{c|c|c}
\hline
DOFS & $L_2$ &  EOC\\
\hline
544   &1.90E-01&\\
2176  &3.06E-02&2.6\\
8704  &4.28E-03&2.8\\
34826 &8.44E-04&2.3\\
139264&1.80E-04&2.2\\
\hline
\end{tabular}\\[0.1cm]
\caption{Experimental order of convergence for the non-conforming entropy stable scheme using DG-operators of degree two and three.}
\label{DGMix2}
\end{center}
\end{minipage}
\qquad\qquad
\begin{minipage}{0.4\textwidth}
\begin{center}
\begin{tabular}{c|c|c}
\hline
DOFS & $L_2$ &  EOC\\
\hline
912   &2.55E-02&\\
3648  &2.02E-03&3.7\\
14592 &1.81E-04&3.5\\
58368 &1.98E-05&3.2\\
233472&2.28E-06&3.1\\
\hline
\end{tabular}\\[0.1cm]
\caption{Experimental order of convergence for the non-conforming entropy stable scheme using DG-operators of degree three and four.}
\label{DGMix3}
\end{center}
\end{minipage}
\end{center}
\end{table}

We verify a convergence order slightly higher than $p$, where $p=\min\{p_A,p_B,p_C\}$. This result is also documented for non-conforming schemes as in \cite{Friedrich2016} for linear problems. In comparison, conforming schemes have an EOC of $p+1$. The order reduction occurs presumably because of the degree of the projection operators.

Focusing on two elements with SBP operators $(\mass_A, \D_A)$ and $(\mass_B, \D_B)$, where $\D_A$ and $\D_B$ are of degree $p_A$ and $p_B$. For SBP operators constructed on LGL nodes (DGSEM \cite{Gassner2013}) or on uniform distributed nodes (SBP-SAT finite difference \cite{DCDRF2014}) the norm matrices $\mass_A$ and $\mass_B$ can integrate polynomials of degree $2p_A-1$ and $2p_B-1$ exactly. Let $\mat{P}_{A2B}$ denote projection operator of degree $p_1$ and $\mat{P}_{B2A}$ denote the projection operator of degree $p_2$, then Lundquist and Nordstr\"om \cite{Nordstrom2015} proved that
\begin{equation}
p_1+p_2\le 2p_{min}-1,
\end{equation}
where $p_{min}=\min\{p_A,p_B\}$. So, when considering non-conforming schemes, not all projection operators can be of degree $p_{min}$. The upper bound of $2p_{min}-1$ is due to the accuracy of the integration matrix. For this reason Friedrich et al. \cite{Friedrich2016} created a special set of SBP-finite difference operators, where the norm matrix can integrate polynomials of degree $>2p_{min}$ exactly. With these operators it is possible to construct projection operators of the same degree as the SBP-operators (degree preservation). The construction of the projection operators is outlined in \cite{Friedrich2016}. Convergence test with these operators are documented in Appendix \ref{sec:App C} and show a full convergence order of $p+1$ in the non-conforming case.

To summarize, the non-conforming entropy stable scheme has the flexibility to chose different nodal distribution aswell as elements of different sizes and obtains an experimental order of convergence of $p$.

%At this point we compare the non-conforming scheme with a conforming scheme. A conforming scheme consists of the same DG-operators of degree $p$ in each element and has an experimental order of convergence of $p+1$. For the non-conforming scheme, we chose the same configuration by setting all element types to be of the same degree. For this special test case, there is no $p$ refinement between elements of type $B$ and $C$. However, due to the width of the elements we consider $h$ refinement on $\Gamma_h$. The results of these configuration with $p=2$ and $p=3$ are shown in Tables \ref{DGdiss2} and \ref{DGdiss3}.
%\begin{table}[ht]
%\begin{center}
%\textbf{DG operators with the same polynomial degree}\\
%\vspace{0.3cm}
%\begin{minipage}{0.4\textwidth}
%\begin{center}
%\begin{tabular}{c|c|c}
%\hline
%DOFS & $L_2$ &  EOC\\
%\hline
%432 &1.89E-01&\\
%1728&2.90E-02&2.7\\
%6912&3.81E-03&2.9\\
%27648&6.91E-04&2.5\\
%110592&1.29E-04&2.4\\
%\hline
%\end{tabular}\\[0.1cm]
%\caption{Experimental order of convergence for SBP-DG-operators with $p=2$.}
%\label{DGdiss2}
%\end{center}
%\end{minipage}
%\qquad\qquad
%\begin{minipage}{0.4\textwidth}
%\begin{center}
%\begin{tabular}{c|c|c}
%\hline
%DOFS & $L_2$ &  EOC\\
%\hline
%768   &2.26E-02&\\
%3072  &2.46E-03&3.5\\
%12288 &1.97E-04&3.4\\
%49152 &1.75E-05&2.8\\
%196608&2.27E-06&3.1\\
%\hline
%\end{tabular}\\[0.1cm]
%\caption{Experimental order of convergence for SBP-DG-operators with $p=3$.}
%\label{DGdiss3}
%\end{center}
%\end{minipage}
%\end{center}
%\end{table}
%In Tables \ref{DGdiss2}-\ref{DGdiss3} we again verify a convergence rate slightly higher than $p$, but not $p+1$.

\subsection{Verification of Primary and Entropy Conservation/Stability}
In this section we numerically verify primary conservation and entropy conservation/stability for the new derived scheme. We first demonstrate entropy conservation which was the result of Theorem \ref{thm: p} and Corollary \ref{cor}. Therefore we consider the entropy conservative flux of Ismail and Roe \cite{Ismail2009} without dissipation. To verify the conservation of entropy, we consider the mesh in Fig. \ref{mesh3}(c) on $\Omega=[0,1]\times[0,1]$ with periodic boundary conditions. For each type of element we consider a DG operators with $p_A=p_C=3$ and $p_B=4$. To calculate the discrete growth in the primary quantities and entropy we rewrite \eqref{eq:splitDG} by
\begin{equation}\label{eq:splitDGRes}
J\left(\bm{U}_t\right)_{ij}+\bm{Res}\left(\bm{U}_t\right)_{ij} =0,
\end{equation}
where
\begin{equation}
\resizebox{\hsize}{!}{$
\begin{aligned}
\bm{Res}\left(\bm{U}_t\right)_{ij} =&+ \frac{1}{\mass_{ii}}\left(\delta_{iN}\left[\tilde{\bm{F}}^{\EC}(1,\eta_j;\hat{n}) - \tilde{\bm{F}}_{Nj}\right] - \delta_{i0}\left[\tilde{\bm{F}}^{\EC}(-1,\eta_j;\hat{n}) - \tilde{\bm{F}}_{0j}\right]\right)+2\sum_{m=0}^N \D_{im}\tilde{\bm{F}}^{\EC}\left(\bm{U}_{ij},\bm{U}_{mj}\right)\\
&+ \frac{1}{\mass_{jj}}\left(\delta_{jN}\left[\tilde{\bm{G}}^{\EC}(\xi_i,1;\hat{n}) - \tilde{\bm{G}}_{iN}\right] - \delta_{j0}\left[\tilde{\bm{G}}^{\EC}(\xi_i,-1;\hat{n}) - \tilde{\bm{G}}_{i0}\right]\right)+2\sum_{m=0}^N \D_{jm}\tilde{\bm{G}}^{\EC}\left(\bm{U}_{ij},\bm{U}_{im}\right).\\
\end{aligned}$}
\end{equation}
The growth in entropy is computed by contracting \eqref{eq:splitDGRes} with the vector of entropy variables, i.e.,
\begin{equation}
\label{eq:splitDGRes2}
J\bm{V}_{ij}^T\left(\bm{U}_t\right)_{ij}=-\bm{V}_{ij}^T\bm{Res}\left(\bm{U}_t\right)_{ij}
\Leftrightarrow J\dEntij=-\bm{V}_{ij}^T\bm{Res}\left(\bm{U}_t\right)_{ij},
\end{equation}
where we use the definition of the entropy variables \eqref{eq:entVars} to obtain the temporal derivative, $\dEntij$, at each LGL node. As shown in Theorem \ref{thm: diss}, the scheme is primary and entropy conservative when no interface dissipation is included, meaning that
\begin{equation}
\begin{split}
\sum\limits_{\mathrm{all\,elements}} J\! \sum_{i,j=0}^N\omega_i\omega_j \left(\bm{U}_t\right)_{ij}= 0,\\
\sum\limits_{\mathrm{all\,elements}} J\! \sum_{i,j=0}^N\omega_i\omega_j \dEntij= 0,
\end{split}
\end{equation}
for all time. We verify this result numerically inserting \eqref{eq:splitDGRes2} and calculate
\begin{equation}
\begin{split}
\bm{\dTotCon}&:=-\sum\limits_{\mathrm{all\,elements}}~\sum_{i,j=0}^N\omega_i\omega_j \bm{Res}\left(\bm{U}_t\right)_{ij}= 0,\\
\dTotEnt&:=-\sum\limits_{\mathrm{all\,elements}}~\sum_{i,j=0}^N\omega_i\omega_j\bm{V}_{ij}^T\bm{Res}\left(\bm{U}_t\right)_{ij},
\end{split}
\end{equation}
using a discontinuous initial condition
\begin{equation}
\begin{pmatrix}
\rho\\
u\\
v\\
p\\
\end{pmatrix}
=
\begin{pmatrix}
\mu_{1,1}\\
\mu_{1,2}\\
\mu_{1,3}\\
\mu_{1,4}\\
\end{pmatrix}
\quad\text{ if } x\le y,\qquad
\begin{pmatrix}
\rho\\
u\\
v\\
p\\
\end{pmatrix}
=
\begin{pmatrix}
\mu_{2,1}\\
\mu_{2,2}\\
\mu_{2,3}\\
\mu_{2,4}\\
\end{pmatrix}
\quad\text{ if } x> y.
\end{equation}
Here $\mu_{k,l}$ are uniformly generated random numbers in $[0,1]$. The random initial condition is chosen to demonstrate entropy conservation independent of the initial condition. We calculate $\bm{\dTotCon}$ and $\dEnt$ for $1000$ different initial conditions which gives us $(\dTotCon)_{lk}$ and $(\dTotEnt)_{k}$ for $k=1,\dots,1000$ and $l=1,\dots,4$. Within the $L_2$ product we obtain
\begin{table}[ht]
\begin{center}
\textbf{Verification of primary and entropy conservation}\\
\vspace{0.3cm}
\begin{tabular}{c|c|c|c|c}
\hline
$L_2(\bm{\dTotEnt})$ & $L_2((\bm{\dTotCon})_{1,:})$ & $L_2((\bm{\dTotCon)_{2,:}})$ & $L_2((\bm{\dTotCon)_{4,:}})$ & $L_2((\bm{\dTotCon)_{4,:}})$\\
\hline
4.56E-14 & 2.57E-14 & 1.35E-14 & 2.26E-14 & 8.53E-14\\
\hline
\end{tabular}\\[0.1cm]
\caption{Calculating the growth of the primary quantities $\bm{\dTotCon}$ and entropy $\bm{\dTotCon}$ for 1000 different random initial conditions with the new scheme. The growth is presented within the $L_2$ product. All values are near machine precision which demonstrates primary and entropy conservation.}
\label{TabEnt1}
\end{center}
\end{table}
 
In Table \ref{TabEnt1} we verify primary and entropy conservation. In comparison, when considering the same setup and calculating the numerical flux with the standard mortar method by Kopriva \cite{Kopriva1996b} we verify primary conservation but the method \textit{is not} entropy conservative, see Table \ref{TabEnt2}. 

\begin{table}[ht]
\begin{center}
\textbf{Calculating the growth in the primary quantities and entropy with the standard mortar method}\\
\vspace{0.3cm}
\begin{tabular}{c|c|c|c|c}
\hline
$L_2(\bm{\dTotEnt})$ & $L_2((\bm{\dTotCon})_{1,:})$ & $L_2((\bm{\dTotCon)_{2,:}})$ & $L_2((\bm{\dTotCon)_{4,:}})$ & $L_2((\bm{\dTotCon)_{4,:}})$\\
\hline
3.07E-02 & 5.90E-15 & 2.62E-14 & 6.27E-15 & 1.04E-13\\
\hline
\end{tabular}\\[0.1cm]
\caption{Calculating the growth of the primary quantities $\bm{\dTotCon}$ and entropy $\bm{\dTotCon}$ for 1000 different random initial conditions with the mortar element method. The growth is presented within the $L_2$- product. Here, we verify conservation of the primary quantities but not entropy conservation.}
\label{TabEnt2}
\end{center}
\end{table}
Next, we demonstrate the increased robustness of the novel entropy conservative, non-conforming scheme. Therefore, we approximate the total entropy in time by
\begin{equation}
\TotEnt:=\sum\limits_{\mathrm{all\,elements}} J\! \sum_{i,j=0}^N\omega_i\omega_j \left(S\right)_{ij}
\end{equation}
over the time domain $t\in [0,T]$, where we choose $T=25$ and $CFL=0.5$. For the Euler equations the entropy function is defined by
\begin{equation}
\Ent=-\frac{\rho}{\gamma-1}\log\left(\frac{p}{\rho^{\gamma}}\right).
\end{equation}
We solve for the total entropy in time with the low-storage Runge-Kutta time integration method of Carpenter and Kennedy \cite{Kennedy1994} using a discontinuous initial condition
\begin{equation}
\begin{pmatrix}
\rho\\
u\\
v\\
p\\
\end{pmatrix}
=
\begin{pmatrix}
1.08\\
0.2\\
0.01\\
0.95\\
\end{pmatrix}
\quad\text{ if } x\le y,\qquad
\begin{pmatrix}
\rho\\
u\\
v\\
p\\
\end{pmatrix}
=
\begin{pmatrix}
1\\
10^{-12}\\
10^{-12}\\
1\\
\end{pmatrix}
\quad\text{ if } x> y,
\end{equation}
and periodic boundaries. Again, we use the new derived method and the classical mortar method \cite{Tan2012,Kopriva1996b}. In Fig. \ref{fig:Mortar} we plot the temporal evolution of the entropy for the standard mortar method against the newly derived scheme.

\begin{figure}[ht]
\begin{center}
	  \includegraphics[scale=0.35]{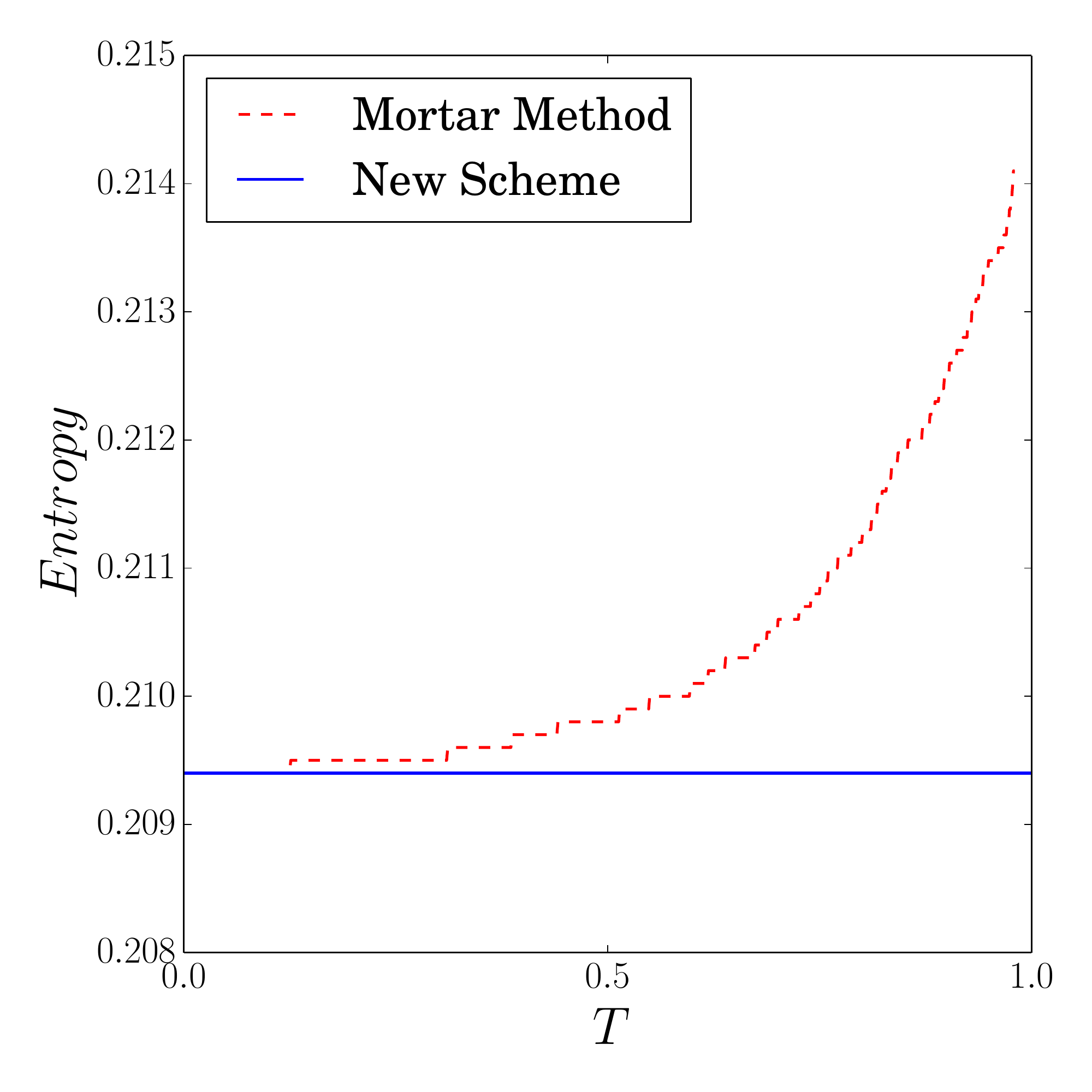}
		\caption{Evolution of the total entropy comparing the behavior of the standard mortar method against the new scheme derived in this work with an entropy conservative surface flux. We see that the total entropy grows for the mortar method with this test configuration whereas the entropy is conserved for the new scheme. In fact, the mortar method crashes at $t\approx 1$.}
		\label{fig:Mortar}
	\end{center}
\end{figure}

The new scheme conserves the total entropy. However, for the mortar method we observe an unpredictable behavior of the entropy for $t<1$ and note that at $t\approx 1$ the approach even crashes. This has been verified for the CFL numbers $CFL=0.5;~0.25;~0.125;~0.0625$ and demonstrates the enhanced robustness of entropy conserving/stable schemes.

Finally, we verify the entropy stability and conservation of the primary quantities. Therefore, we include dissipation in a local Lax-Friedrichs sense as described in Sec. \ref{sec:Dissipation}. For this test we use the same configuration as for verifying entropy conservation and set $CFL=0.5$.
\begin{figure}[ht]
\begin{center}
	\begin{minipage}{0.4\textwidth}
\begin{flushleft}
		\vspace{0.785cm}
	  \includegraphics[width=\textwidth]{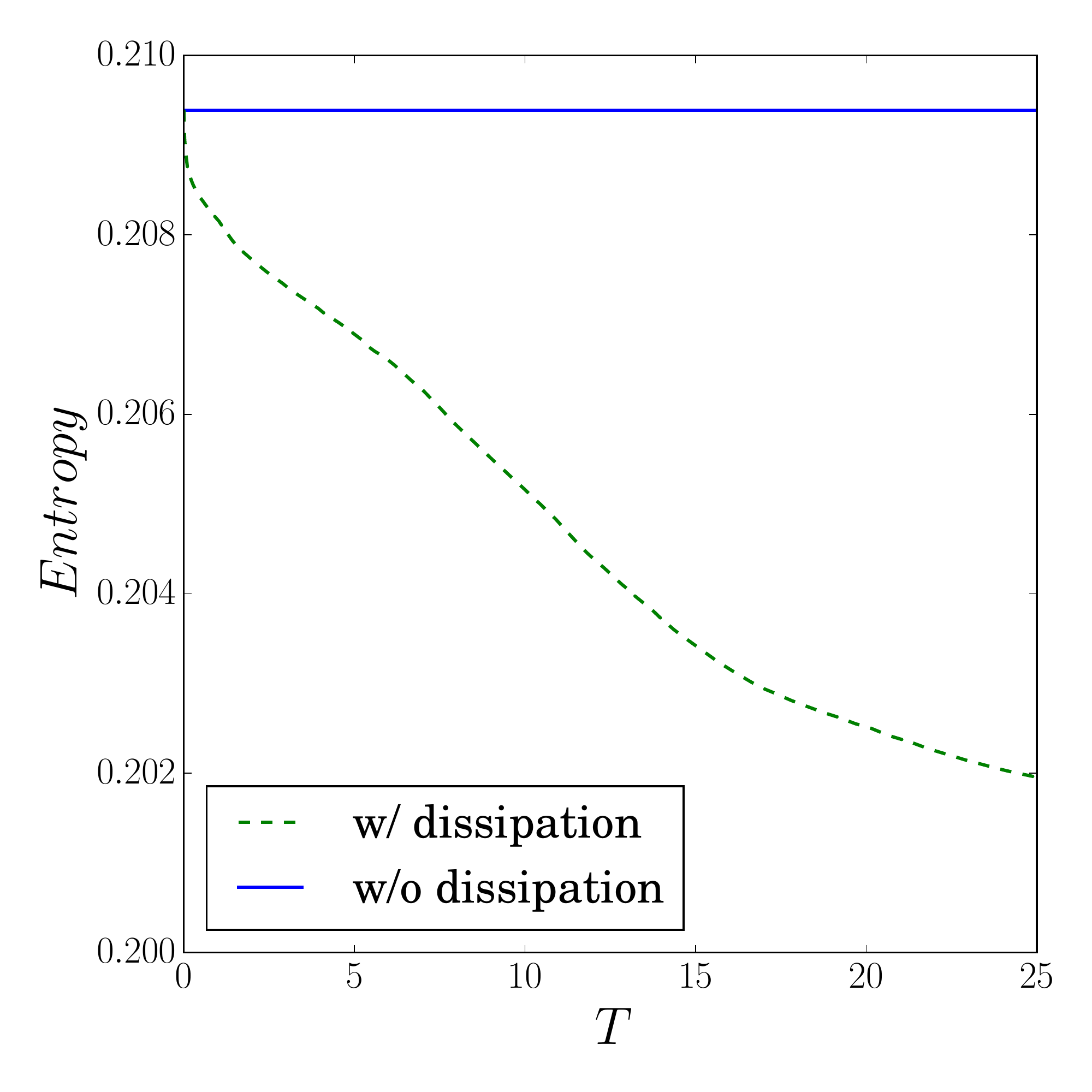}
		\caption{Evolution of the total entropy of the solution with and without dissipation. We see that the total entropy is conserved when no interface dissipation is included and total energy decays with interface dissipation.}
		\label{fig: Ent}
\end{flushleft}
	\end{minipage}
	\qquad
	\begin{minipage}{0.4\textwidth}
	  \includegraphics[width=\textwidth]{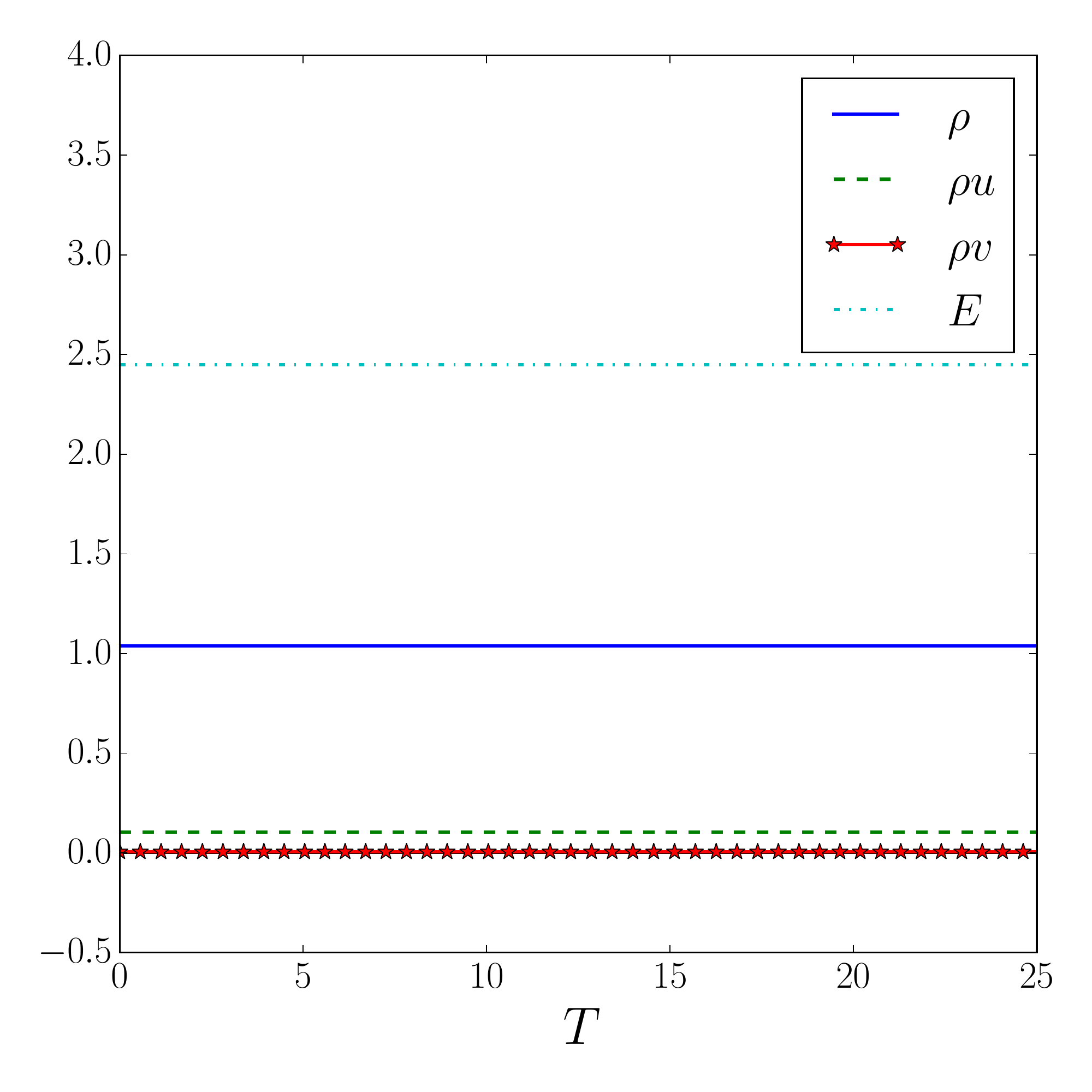}
		\caption{A plot that demonstrates the conservation of the primary quantities. The plots do not depend on interface dissipation.}
		\label{fig: Con}
	\end{minipage}
	\end{center}
\end{figure}

In Fig. \ref{fig: Con} we can see that the primary quantities are conserved over time. Note, that in comparison the non-entropy conserving mortar scheme crashes at $t\approx 1$. Also, we note that the plot remains the same whether or not dissipation is included. In Fig. \ref{fig: Ent} we can see that the total entropy remains constant when considering an entropy conservative flux. Therefore, when including dissipation, the total entropy decays which numerically verifies entropy stability.

%%%%%%%%%%%%%%%%%%%%%%%%%%%%%%%%%%%%%%%%%%%%%%%%%%%%%%%%%%%%%%%%%%%%%%%%%%%%%%%%%%
\section{Conclusion}
%%%%%%%%%%%%%%%%%%%%%%%%%%%%%%%%%%%%%%%%%%%%%%%%%%%%%%%%%%%%%%%%%%%%%%%%%%%%%%%%%%

In this work we derived a $h/p$ non-conforming primary conservative and entropy stable discontinuous Galerkin spectral element approximation with the summation-by-parts (SBP) property for non-linear conservation laws. We first examined the standard mortar method and found that it did not guarantee entropy conservation/stability for non-linear problems. Hence, we present a modification of the mortar method with special attention given to the projection operators between non-conforming elements. As an extension of the work \cite{Carpenter2016} we extend an entropy stable $p$ non-conforming discretization to a more general $h/p$ non-conforming setup. Neither the nodes nor the interface of two neighboring elements need to coincide in the novel approach. Throughout the derivations in this paper it was required to consider SBP operators, like that for the LGL nodal discontinuous Galerkin spectral method, as these operators mimic the integration-by-parts rule in a discrete manner. To demonstrate the high-order accuracy and entropy conservation/stability of the non-conforming DGSEM we selected the two-dimensional Euler equations. However, we reiterate that the proofs contained herein are general for systems of non-linear hyperbolic conservation laws and directly apply to all diagonal norm SBP operators, as e.g. presented in Appendix \ref{sec:App C}.

%We have noted that stable SBP approximations on non-conforming meshes lose one order of magnitude in the convergence order compared to the conforming mesh case \cite{Nordstrom2015}. Therefore, we applied recently developed degree preserving SBP operators presented in \cite{Friedrich2016}. For these operators the norm matrices of a SBP operator can integrate polynomials of degree $\ge 2p$ exactly. Thus, we can construct entropy stable projection operators of degree $p$ which communicate the solution between the non-conforming elements and retain the optimal convergence order of the DGSEM even on $h/p$ non-conforming meshes. In the numerical results section we have verified the primary conservation and entropy conservation/stability as well as the optimal convergence order of the proposed numerical scheme. %Also, we have demonstrated the benefit of degree preserving SBP operators to retain the convergence order properties . %So we can adapt the convergence qualities to a more flexible mesh, which is an useful result when considering simulations with a wide range of spatial scales.

\acknowledgement{Lucas Friedrich and Andrew Winters were funded by the Deutsche Forschungsgemeinschaft (DFG) grant TA 2160/1-1. Special thanks goes to the Albertus Magnus Graduate Center (AMGC) of the University of Cologne for funding Lucas Friedrich's visit to the National Institute of Aerospace, Hampton, VA, USA. Gregor Gassner has been supported by the European Research Council (ERC) under the European Union's Eights Framework Program Horizon 2020 with the research project \textit{Extreme}, ERC grant agreement no. 714487. This work was partially performed on the Cologne High Efficiency Operating Platform for Sciences (CHEOPS) at the Regionales Rechenzentrum K\"{o}ln (RRZK) at the University of Cologne.}

\bibliographystyle{plain}
\bibliography{Bib}

\appendix

\section{Derivations of the Growth of Primary Quantities and Entropy}\label{sec:App A}
The proof below is the same result as presented by Fisher et al. \cite{Fisher2013}. For completeness, we re-derive the proof in our notation. We analyze the two dimensional discretizaion of \eqref{eq:ECDG} on a single element.
\begin{equation}
J\omega_i\omega_j\left(\bm{U}_{t}\right)_{ij}+\omega_j\mathcal{L}(\bm{U}_{ij})_x+\omega_i\mathcal{L}(\bm{U}_{ij})_y=0,
\end{equation}
with
\begin{equation}
\begin{aligned}
\mathcal{L}(\bm{U}_{ij})_x&=2\sum_{m=0}^N\omega_i\D_{im}\fsharp(\ukij,\ukmj)-\left(\delta_{iN}[\tilde{\bm{F}}-\fstar]_{Nj}-\delta_{i0}[\tilde{\bm{F}}-\fstar]_{0j}\right),\\
\mathcal{L}(\bm{U}_{ij})_y&=2\sum_{m=0}^N\omega_j\D_{jm}\gsharp(\ukij,\ukim)-\left(\delta_{Nj}[\tilde{\bm{G}}-\gstar]_{iN}-\delta_{0j}[\tilde{\bm{G}}-\gstar]_{i0}\right).
\end{aligned}
\end{equation}
Assuming, that $\fsharp$ and $\gsharp$ satisfy the appropriate entropy condition \eqref{eq:entCondition}
\begin{equation}
\begin{split}
\fsharp(\ukij,\uk_{ml})^T\left(\bm{V}_{ij}-\bm{V}_{ml}^{\eqInd}\right)=\tilde\Psi^f_{ij}-\tilde\Psi^f_{ml},\\
\gsharp(\ukij,\uk_{ml})^T\left(\bm{V}_{ij}-\bm{V}_{ml}^{\eqInd}\right)=\tilde\Psi^g_{ij}-\tilde\Psi^g_{ml}.
\end{split}
\end{equation}
First, we derive the growth of the primary quantities on each element $E_k, k=1,\ldots,K$. Summing over all nodes $i,j=0,\dots,N$ yields
\begin{equation}
\underbrace{J\sum_{i,j=0}^N\omega_i\omega_j\left(\bm{U}_{t}\right)_{ij}}_{\approx\int \bm{U}_t \,\mathrm{d}E}+\sum_{j=0}^N\omega_j\sum_{i=0}^N\mathcal{L}(\bm{U}_{ij})_x+\sum_{i=0}^N\omega_i\sum_{j=0}^N\mathcal{L}(\bm{U}_{ij})_y=0,
\end{equation}
with
\begin{equation}
\sum_{i=0}^N\mathcal{L}(\bm{U}_{ij})_x=2\sum_{i=0}^N\sum_{m=0}^N\mat{Q}_{im}\fsharp(\ukij,\ukmj)-[\tilde{\bm{F}}-\fstar]_{Nj}+[\tilde{\bm{F}}-\fstar]_{0j}.
\end{equation}
Using the SBP property of the matrices $2\mat Q=\mat Q-\mat Q^T+\mat B$ we find
\begin{equation}
\sum_{i=0}^N\mathcal{L}(\bm{U}_{ij})_x=\sum_{i=0}^N\sum_{m=0}^N \mat{Q}_{im}\fsharp(\ukij,\ukmj)-\sum_{i=0}^N\sum_{m=0}^N\mat{Q}_{im}\fsharp(\ukmj,\ukij)+\fstar_{Nj}-\fstar_{0j}.
\end{equation}
Due to nearly skew-symmetric nature of $\mat{Q}$ we arrive at
\begin{equation}
\sum_{i=0}^N\mathcal{L}(\bm{U}_{ij})_x=\fstar_{Nj}-\fstar_{0j}.
\end{equation}
And similar for $\sum\limits_{j=0}^N\mathcal{L}(\bm{U}_{ij})_y$ we have
\begin{equation}
\sum_{j=0}^N\mathcal{L}(\bm{U}_{ij})_x=\gstar_{iN}-\gstar_{i0}.
\end{equation}
Together both directions yield
\begin{equation}
\underbrace{J\sum_{i,j=0}^N\omega_i\omega_j\left(\bm{U}_{t}\right)_{ij}}_{\approx\int \bm{U}_t \,\mathrm{d}E}=-\sum_{j=0}^N\omega_j\left(\fstar_{Nj}-\fstar_{0j}\right)-\sum_{i=0}^N\omega_i\left(\gstar_{iN}-\gstar_{i0}\right),
\end{equation}
which is precisely \eqref{ThmdU}.

Next, we derive the entropy growth on the single element. To do so, we pre-multiply with the entropy variables and sum over all nodes to get
\begin{equation}
J\sum_{i,j=0}^N\omega_i\omega_j\underbrace{\bm{V}_{ij}^T\left(\bm{U}_{t}\right)_{ij}}_{=:\dEntij}+\sum_{j=0}^N\omega_j\sum_{i=0}^N\bm{V}_{ij}^T\mathcal{L}(\bm{U}_{ij})_x + \sum_{i=0}^N\omega_j\sum_{j=0}^N\bm{V}_{ij}^T\mathcal{L}(\bm{U}_{ij})_y=0,
\end{equation}
with
\begin{equation}
\begin{aligned}
\sum_{i=0}^N\bm{V}_{ij}^T\mathcal{L}(\bm{U}_{ij})_x=&2\sum_{i=0}^N\sum_{m=0}^N\mat{Q}_{im}\bm{V}_{ij}^T\fsharp(\ukij,\ukmj)-\left[\bm{V}^T\tilde{\bm{F}}-\bm{V}^T\fstar\right]_{Nj}+\left[\bm{V}^T\tilde{\bm{F}}+\bm{V}^T\fstar\right]_{0j}.
\end{aligned}
\end{equation}
Again using $2\mat Q=\mat Q-\mat Q^T+\mat B$ we have
\begin{equation}
\begin{aligned}
\sum_{i=0}^N\bm{V}_{ij}^T\mathcal{L}(\bm{U}_{ij})_x=&\sum_{i=0}^N\sum_{m=0}^N\mat{Q}_{im}\bm{V}_{ij}^T\fsharp(\ukij,\ukmj)-\sum_{i=0}^N\sum_{m=0}^N\mat{Q}_{im}\bm{V}_{ji}^T\fsharp(\ukmj,\ukij)\\
&+\bm{V}_{Nj}^T\fstar_{Nj}-\bm{V}^T_{0j}\fstar_{0j}.
\end{aligned}
\end{equation}
Due to entropy conservation condition \eqref{eq:entCondition} and the consistency of the derivative matrix, i.e. $\D\One=\bm 0~\left(\Leftrightarrow\mat Q\One=\bm 0\right)$ we find
\begin{equation}
\begin{aligned}
\sum_{i=0}^N\bm{V}_{ij}^T\mathcal{L}(\bm{U}_{ij})_x
=&\sum_{i=0}^N\sum_{m=0}^N\mat{Q}_{im}\underbrace{\left(\left(\fsharp(\ukij,\ukmj)\right)^T\left(\bm{V}_{ij}-\bm{V}_{ji}\right)\right)}_{=\tilde{\Psi}^f_{ij}-\tilde{\Psi}^f_{mj}}+\bm{V}_{Nj}^T\fstar_{Nj}-\bm{V}_{0j}^T\fstar_{0j},\\
=&\sum_{i=0}^N\tilde{\Psi}^f_{ij}\underbrace{\sum_{m=0}^N\mat{Q}_{im}}_{=0}-\sum_{i=0}^N\sum_{m=0}^N\underbrace{\mat{Q}_{im}}_{\mat{B}_{im}-\mat{Q}_{mi}}\tilde{\Psi}^f_{mj}+\bm{V}_{Nj}^T\fstar_{Nj}-\bm{V}_{0j}^T\fstar_{0j},\\
=&-\sum_{i=0}^N\sum_{m=0}^N\mat{B}_{im}\tilde{\Psi}^f_{mj}+\sum_{m=0}^N\tilde{\Psi}^f_{mj}\underbrace{\sum_{i=0}^N\mat{Q}_{mi}}_{=0}+\bm{V}_{Nj}^T\fstar_{Nj}-\bm{V}_{0j}^T\fstar_{0j},\\
=&\left(\bm{V}_{Nj}^T\fstar_{Nj}-\tilde{\Psi}^f_{Nj}\right)-\left(\bm{V}_{0j}^T\fstar_{0j}-\tilde{\Psi}^f_{0j}\right).
\end{aligned}
\end{equation}
And similar for $\sum\limits_{j=0}^N\bm{V}_{ij}^T\mathcal{L}(\bm{U}_{ij})_y$ we get
\begin{equation}
\sum_{j=0}^N\bm{V}_{ij}^T\mathcal{L}(\bm{U}_{ij})_y=\left(\bm{V}_{iN}^T\gstar_{iN}-\tilde{\Psi}^g_{iN}\right)-\left(\bm{V}_{i0}^T\gstar_{i0}-\tilde{\Psi}^g_{i0}\right)
\end{equation}
Both directions together yield
\begin{equation}
\begin{split}
\underbrace{J\sum_{i,j=0}^N\omega_i\omega_j\dEntij}_{\approx\int \dEnt \,\mathrm{d}E}=&-\sum_{j=0}^N\omega_j\left(\left(\bm{V}_{Nj}^T\fstar_{Nj}-\tilde{\Psi}^f_{Nj}\right)-\left(\bm{V}_{0j}^k\fstar_{0j}-\tilde{\Psi}^f_{0j}\right)\right)\\
&-\sum_{i=0}^N\omega_i\left(\left(\bm{V}_{iN}^T\gstar_{iN}-\tilde{\Psi}^g_{iN}\right)-\left(\bm{V}_{i0}^T\gstar_{i0}-\tilde{\Psi}^g_{i0}\right)\right),
\end{split}
\end{equation}
which is precisely \eqref{ThmdEnt}.

\section{Projection operators for Discontinuous Galerkin methods}\label{sec:App B}

The projection operators for DG methods are constructed with the \textit{Mortar Element Method} by Kopriva \cite{Kopriva2002}.

Here we assume two neighboring elements with a single coinciding interface as in Fig. \ref{Confmesh}. Let $N,M$ denote the polynomial degrees of both elements with corresponding one-dimensional nodes $x_0^N,\dots,x_N^N$ and $x_0^M,\dots,x_M^M$ and integration weights $\omega_0^N,\dots,\omega_N^N$ and $\omega_0^M,\dots,\omega_M^M$. The corresponding norm matrices are defined as $\mass_N=\diag(\omega_0^N,\dots,\omega_N^N)$ and $\mass_M=\diag(\omega_0^M,\dots,\omega_M^M)$ and each element is equipped with a set of Lagrange basis functions $l_0^N,\dots,l_N^N$ and $l_0^M,\dots,l_M^M$.

As for the elements, the mortar also consists of a set of nodes, integration weights, norm matrix and Lagrange basis functions. Without lose of generality, we assume $N<M$. Therefore, the polynomial order on the mortar $N_{\Xi} = \max\{N,M\} = M$. So the mortar will simply copy the solution data form the element with the higher polynomial degree $M$ because the nodal distributions are identical. Thus, the projection operator from the element to the mortar as well as back from the mortar to the element are simply the identity matrix of size $M$, i.e. $\mat P_{M2\Xi}=\mat I^M$ and $\mat P_{\Xi2M}=\mat I^M$. Next, we briefly describe how to project the element of degree $N$ to the mortar and back.

\textbf{Step 1 (Projection from element of degree $N$ to the mortar):} Assume we have a discrete evaluated function $\bm f=(f_0,\dots,f_N)^T$ with $f(x)=\sum_{i=0}^N\ell_i^N(x)f_i$. We want to project this function onto the mortar to obtain $\bm{f}^{\Xi}=(f^{\Xi}_0,\dots, f^{\Xi}_{M})^T$ with $ f^{\Xi}(x)=\sum_{j=0}^{M}\ell_j^M(x) f^{\Xi}_j$. Note, that ${f}(x)\neq {f}^\Xi(x)$ for a polynomial of higher degree. In \cite{Kopriva2002} the operator $P_{N2\Xi}$ is created by a $L_2$ projection on the mortar
\begin{equation}\label{App projection}
\ip{f}{L_2}{\ell_j^M} = \ip{f^{\Xi}}{L_2}{\ell_j^M}
\Leftrightarrow
\sum_{i=0}^N\ip{\ell_i^N}{L_2}{\ell_j^M}f_i=\sum_{i=0}^M\ip{\ell_i^M}{L_2}{\ell_j^M}\ f^\Xi_i,
\end{equation}
for $j=0,\dots,M$. Here, the $L_2$ inner products are evaluated discretely using the appropriate norm matrices. The $L_2$ inner product on the left in \eqref{App projection} is evaluated exactly due to the high-order nature of the LGL quadrature and the assumption that $N<M$. Therefore, using $M$-LGL nodes and weights we have
\begin{equation}\label{App left}
\ip{\ell_i^N}{L_2}{\ell_j^M} = \ip{\ell_i^N}{M}{\ell_j^M} = \sum_{k=0}^M\omega_k^M\ell_i^N(x_k^M)\ell_j^M(x_k^M)=\sum_{k=0}^M\omega_k^M\ell_i^N(x_k^M)\delta_{jk}=\omega_j^M\ell_i^N(x_j^M),
\end{equation}
for $i=0,\ldots,N$, $j=0,\ldots,M$ and use the Kronecker delta property of the Lagrange basis. On the right side of \eqref{App projection} we evaluate an inner product of two polynomial basis functions of order $M$. Therefore, due to the exactness of the LGL quadrature, the $L_2$ inner product is approximated by an integration rule with mass lumping, e.g. \cite{Tan2012},
\begin{equation}\label{App right}
\ip{\ell_i^M}{L_2}{\ell_j^M} \approx \ip{\ell_i^M}{M}{\ell_j^M} = \sum_{k=0}^M\omega_k^M\ell_i^M(x_k^M)\ell_j^M(x_k^M)=\sum_{k=0}^M\omega_k^M\delta_{ik} \delta_{jk} = \delta_{ij}\omega_j^M,
\end{equation}
for $i,j=0,\ldots,M$. Next, we define \textit{interpolation} operators
\begin{equation}
[\mat L_{N2\Xi}]_{ij}:= \ell_j^N(x_i^M),
\end{equation}
with $i=0,\dots,N$ and $j=0,\dots,M$ to rewrite \eqref{App projection} in a compact matrix-vector notation
\begin{equation}
\mass_M\mat L_{N2\Xi} \bm f=\mass_M\bm{f}^\Xi
\Leftrightarrow \underbrace{\mat L_{N2\Xi}}_{:= \mat{P}_{N2\Xi}}\bm f=\bm{f}^\Xi.
\end{equation}
So the projection operator to move the solution from the element with $N$ nodes onto the mortar is equivalent to an interpolation operator. However, this does not hold for projecting the solution from the mortar back to the element.

\textbf{Step 2 (Projection from the mortar to element of degree $N$):} To construct the operator $\mat{P}_{\Xi 2N}$ we consider the $L_2$ projection from the mortar back to an element with $N$ nodes. Here, we assume a discrete evaluation of the solution on the mortar $\bm f^\Xi=(f_0^\Xi,\dots,f_M^\Xi)^T$ with $f^\Xi(x)=\sum_{i=0}^M \ell_i^M(x)f_i^\Xi$ and seek the solution on the element $\bm{f}=( f_0,\dots, f_N)^T$ with $ f(x)=\sum_{i=0}^N \ell_i^N(x) f_i$. The $L_2$ projection back to the element is
\begin{equation}\label{AppBackprojection}
\ip{f^{\Xi}}{L_2}{\ell_j^N}=\ip{f}{L_2}{\ell_j^N}
\Leftrightarrow\sum_{i=0}^M \ip{\ell_i^M}{L_2}{\ell_j^N}f_i^\Xi=\sum_{i=0}^N\ip{\ell_i^N}{L_2}{\ell_j^N} f_i,
\end{equation}
for $j=0,\dots,N$. The $L_2$ inner product on the left in \eqref{AppBackprojection} is computed exactly using $M$-LGL points and the $L_2$ inner product on the right in \eqref{AppBackprojection} is approximated with mass lumping at $N$-LGL nodes. Thus, we obtain
\begin{equation}
\ip{\ell_i^M}{L_2}{\ell_j^N}=\ip{\ell_i^M}{M}{\ell_j^N}=\sum_{k=0}^M\omega_k^M \ell_i^M(x_k^M) \ell_j^N(x_k^M)=\omega_i^M \ell_j^N(x_i^M),
\end{equation}
where $i=0,\ldots,M$, $j = 0,\ldots,N$ and
\begin{equation}
\ip{\ell_i^N}{L_2}{\ell_j^N}\approx\ip{\ell_i^N}{N}{\ell_j^N}=\sum_{k=0}^N\omega_i^N \ell_i^N(x_k^N) \ell_j^N(x_k^N)=\delta_{ij}\omega_j^N,
\end{equation}
for $i,j=0,\ldots,N$. Again, we write \eqref{AppBackprojection} in a compact matrix-vector notation which gives us
\begin{equation}
\mat L_{N2\Xi}^T\mass_M\bm f^\Xi=\mass_N\bm{f}.
\end{equation}
As $\mat L_{N2\Xi}=\mat P_{N2\Xi}$ we obtain
\begin{equation}
\underbrace{\mass_N^{-1}\mat P_{N2\Xi}^T\mass_M}_{:=\mat P_{\Xi2N}}\bm f^\Xi=\bm{f},
\end{equation}
where we introduce the projection operator (not interpolation operator) from the mortar back to the element with $N$ nodes. With this approach we constructed projection operators satisfying the $\mass$-compatibility condition \eqref{7}, i.e.,
\begin{equation}
\mat P_{\Xi2N}=\mass_N^{-1}\mat P_{N2\Xi}^T\mass_M\Leftrightarrow
\mass_N\mat P_{\Xi2N}=\mat P_{N2\Xi}^T\mass_M.
\end{equation}

By combining the operators, we can construct projections which directly move the solution from one element to another (in some sense ``hiding'' the mortar) to be
\begin{equation}
\begin{split}
\mat P_{N2M}&=\mat{P}_{\Xi2M}\mat{P}_{N2\Xi},\\
\mat P_{M2N}&=\mat{P}_{\Xi2N}\mat{P}_{M2\Xi}.
\end{split}
\end{equation}
Note, that in this paper we only consider LGL-nodes for the approximating the $L_2$-projection. However, the approach in \cite{Kopriva2002} is more general as it also considers Legendre Gauss nodes. Also, the construction of the projection operators on interfaces with hanging nodes is briefly discussed.

\section{Experimental Order of Convergence - Degree Preserving Element based Finite Difference Operators}\label{sec:App C}

Besides Discontinuous Galerkin SBP operators, we analyze the convergence of degree preserving, element based finite difference operators (DPEBFD) operators. As described in \cite{Friedrich2016} these operators are SBP operators by construction, for which our entropy stable discretization remains stable. The norm matrix of the DPEBFD operator integrates polynomials up to degree of $2p+1$ exactly, where $p$ denotes the minimum polynomial degree of all elements. In comparison to SBP finite difference operators as in \cite{DCDRF2014} these operators are element based, meaning that the number of nodes is fixed as for DG operators.

As we focus on elements with SBP operators of the same degree, we set all elements to be DPEBFD elements with degree $p$ where $p=2,3$. To approximate the convergence order of the non-conforming discretization, we consider the same mesh refinement strategy as in Fig. \ref{mesh3} with element types $A,B,B$. These types are set up in the following way:
\begin{itemize}
\item Element $A$ with $22$ nodes in $x$- and $y$-direction,\\
\item Element $B$ with $24$ nodes in $x$- and $y$-direction,\\
\item Element $C$ with $22$ nodes in $x$- and $y$-direction.\\
\end{itemize}
This leads to a mesh considering $h/p$ refinement. Here, we obtain the results in Tables \ref{DP2}-\ref{DP3}

\begin{table}[ht]
\begin{center}
\textbf{DPEBFD SBP operators}\\
\vspace{0.3cm}
\begin{minipage}{0.4\textwidth}
\begin{center}
\begin{tabular}{c|c|c}
\hline
DOFS & $L_2$ &  EOC\\
\hline
6176   &6.09E-01&\\
24704  &1.60E-01&1.9\\
98816  &2.44E-02&2.7\\
395264 &3.11E-03&3.0\\
1581056&3.97E-04&3.0\\
\hline
\end{tabular}\\[0.1cm]
\caption{Experimental order of convergence for DPEBFD operators with $p=2$.}
\label{DP2}
\end{center}
\end{minipage}
\qquad
\begin{minipage}{0.4\textwidth}
\begin{center}
\begin{tabular}{c|c|c}
\hline
DOFS & $L_2$ &  EOC\\
\hline
768   &3.47E-01&\\
3072  &8.70E-02&2.0\\
12288 &6.83E-03&3.7\\
49152 &4.69E-04&3.9\\
196608&2.99E-05&4.0\\
\hline
\end{tabular}\\[0.1cm]
\caption{Experimental order of convergence for DPEBFD operators with $p=3$.}
\label{DP3}
\end{center}
\end{minipage}
\end{center}
\end{table}

As documented in \cite{Friedrich2016} we numerically verify an EOC of $p+1$. So when considering degree preserving SBP operators our entropy stable non-conforming method can handle $h/p$ refinement and possesses full order. However when considering DG-operators we obtain a smaller $L_2$ error for a more coarse mesh. We do not claim, that DPEBFD operators have the best error properties, but considering these operators is a possible cure for retaining a full order scheme. The development of optimal degree preserving SBP operators is left for future work.

\end{document}